\newtheorem{theorem}{Theorem}
\newtheorem{lemma}[theorem]{Lemma}
\newtheorem{claim}[theorem]{Claim}
\newtheorem{corollary}[theorem]{Corollary}
\theoremstyle{definition}
\let\eps\varepsilon
\let\theta\vartheta
\let\phi\varphi
\newcommand\M{\mathcal M}
\newcommand\LL{\mathcal L}
\newcommand\cl{\mathop{\mathsf{cl}}}
\newcommand\ING{\mathop{\mathsf{ING}}\nolimits}
\newcommand\COMM{\mathop{\mathsf{COMM}}\nolimits}
\newcommand\incmp{\mathbin{\|}}
\newcommand\contr{\mathop{\backslash}}
\title{\bf Sticky matroids and convolution}
\author{Laszlo Csirmaz%
\thanks{Central European University}
\thanks{e-mail:~csirmaz@renyi.hu}}
\date{\small\it Dedicated to the memory of Frantisek Mat\'u\v s}
\begin{document}
\maketitle
\begin{abstract}
Motivated by the characterization of the lattice of cyclic flats of a matroid,
the convolution of a ranked lattice and a discrete measure is defined,
generalizing polymatroid convolution. Using the convolution technique we prove
that if a matroid, or polymatroid, has a non-principal modular cut then it is not sticky.
A similar statement for matroids has been proved in \cite{hoch-wilhelmi} using
different technique.

\noindent{\bf Keywords:} matroid, sticky matroid, polymatroid, convolution,
ranked lattice.

\noindent{\bf AMS Subject Classification:} 03G10, 05B35, 06C10

\end{abstract}

\section{Introduction}\label{sec:introduction}

The main purpose of this paper is to provide a proof of the following
statement: \emph{if a (poly)matroid $M$ has a non-principal modular cut then
it is not sticky}. A similar statement for matroids was claimed as Theorem 9
of \cite{hoch-wilhelmi}. Our main tool is a novel convolution-like
construction of polymatroids using a ranked lattice and a discrete measure.
The construction is a common generalization of polymatroid convolution
\cite{Lovasz} and the characterization of cyclic flats \cite{cyclic-flats,
sims}.

We assume familiarity with matroid theory, the main reference is
\cite{oxley}. Most of our results are stated in terms of polymatroids
introduced by Edmonds \cite{edmonds}. Several notions from matroid theory
generalize to polymatroids, and results can frequently be transferred back
to matroids, see \cite{Lovasz}.

All sets in this paper are finite. Sets are denoted by capital letters, such
as $A$, $B$, $K$, etc., their elements by lower case letters. As customary,
curly brackets around singletons are frequently omitted as well as the union
sign. Thus $abAB$ denotes the set $\{a,b\}\cup A\cup B$.

\section{Preliminaries}

This section recalls some basic facts about polymatroids and related
operations. Some are straightforward generalizations from matroid theory,
others need careful tailoring.

\subsection{Polymatroids}\label{subsec:polymatroid}

A \emph{polymatroid} $(f,M)$ is a non-negative, monotone and submodular
real-valued function $f$ defined on the non-empty subsets of the finite set
$M$. Here $M$ is the \emph{ground set}, and $f$ is the \emph{rank function}.
The polymatroid is \emph{integer} if all ranks are integer. An integer
polymatroid is a \emph{matroid} if the rank of singletons are either zero or
one. For details please consult \cite{edmonds,Lovasz,M.entreg,oxley}.

Let $(f,M)$ be a polymatroid, The subset $F\subseteq M$ is a \emph{flat} if
proper extensions of $F$ have strictly larger ranks. The intersection of
flats is a flat, and the \emph{closure} of $A\subseteq M$, denoted by
$\cl_f(A)$, or simply by $\cl(A)$ if the polymatroid is clear from the
context, is the smallest flat containing $A$ (the intersection of
all flats containing $A$). The ground set $M$ is always
a flat, and flats of a polymatroid form a \emph{lattice}, where the meet of
two lattice elements is their intersection, and the join is the closure of
their union.

The flat $C\subseteq M$ is \emph{cyclic} if for all $i\in C$ either $f(i)=0$
or $f(C)-f(C{-}i)<f(i)$. When $(f,M)$ is a matroid then this definition is
equivalent to the original one, namely that $C$ is a union of cycles in the
matroid.

The \emph{modular defect} $\delta(A,B)$ of a pair of subsets $A,B\subseteq
M$ is defined as
$$
    \delta(A,B) = f(A)+f(B)-f(A\cap B)-f(A\cup B).
$$
By submodularity, this is always non-negative. If it equals
zero, the pair $(A,B)$ is a \emph{modular pair}.

\smallskip

A (discrete) \emph{measure} on the finite set $M$ is a
non-negative, additive function defined on subsets of $M$. Such a measure $\mu$ is
determined by its value on singletons:
$$
    \mu(A)={\textstyle\sum}\,\{\mu(a)\,:\,a\in A\}.
$$
For a polymatroid $(f,M)$ the associated measure $\mu_f$ is defined by
$\mu_f(a)=f(a)$ for singletons $a\in M$. The inequality $f(A)\le\mu_f(A)$ is
a consequence of submodularity. If this inequality holds with 
equality for all $A\subseteq M$, then the polymatroid $(f,M)$ is \emph{modular} 
\cite{Lovasz,M.entreg}. Modularity for matroids, however, is defined 
differently in \cite{oxley}, which is called flat-modular here.
The polymatroid is \emph{flat-modular} if every
pair of its flats is a modular pair. Modular polymatroids are flat-modular,
but the converse is not true, the simplest example is the matroid on
two elements with rank function
$$
   f(a)=f(b)=f(ab)=1.
$$
A collection $\M$ of flats is a \emph{modular cut} if
\begin{itemize}\setlength\itemsep{2pt}
\item[(i)] $\M$ is closed upward: if $F_1\in\M$ and $F_1\subseteq F_2$, then $F_2\in\M$,
\item[(ii)] if the pair of elements $F_1,F_2$ of $\M$ is modular, then
$F_1\cap F_2\in\M$.
\end{itemize}
The modular cut $\M$ is \emph{principal} if the intersection of all elements
of $\M$ is an element of $\M$ (including the case when $\M$ has no element
at all). Thus if $\M$ is not principal, then there are $F_1,F_2\in\M$ such
that $F_1\cap F_2\notin\M$ (and then $\delta(F_1,F_2)$ must be positive).
For such a non-principal modular cut, $\delta(\M)$ is the minimal
modular defect of those pairs $F_1,F_2\in\M$ for which $F_1\cap F_2\notin\M$.

Given the flats $F_1, F_2$, the smallest modular cut containing both of them
is \emph{generated by $F_1$ and $F_2$}, and is denoted by $\M(F_1,F_2)$. 
This is a sound definition as the 
intersection of any collection of modular cuts is a modular cut. Let
$\M$ be a non-principal modular cut, and choose the pair $F_1,F_2\in\M$,
$F_1\cap F_2\notin\M$ 
such that $\delta(\M)=\delta(F_1.F_2)$. As
$\M(F_1,F_2)$ is contained in $\M$ and $F_1\cap F_2\notin\M$, the same
intersection is not in $\M(F_1,F_2)$ either. Thus $\M(F_1,F_2)$ is not
principal, and $\delta(\M)=\delta(\M(F_1,F_2))$. The following lemma assumes
this situation.

\begin{lemma}\label{lemma:mod-cut-estimated}
Suppose $\M=\M(F_1,F_2)$ as above, and let $S=F_1\cap F_2$.
Then for all $F\in\M$, {\upshape(a)} $S\subseteq
F$; {\upshape(b)} $f(F)-f(S)>\delta$.
\end{lemma}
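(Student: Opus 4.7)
Part (a) follows from the standard observation that the subfamily $\M':=\{F\in\M:S\subseteq F\}$ is itself a modular cut. Upward closure of $\M'$ is inherited from $\M$ together with the triviality that supersets of sets containing $S$ contain $S$. Closure under modular meets is equally direct: if $F,F'\in\M'$ form a modular pair then $F\cap F'\in\M$ (since $\M$ is a modular cut), and $F\cap F'\supseteq S$ because both $F,F'\supseteq S$. Since $F_1,F_2\in\M'$ and $\M=\M(F_1,F_2)$ is the smallest modular cut containing them, $\M\subseteq\M'$, proving (a).

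For part (b) I will argue by contradiction. Suppose some $F\in\M$ has $f(F)\le f(S)+\delta$, and choose such an $F$ of minimum rank. By (a) we have $S\subseteq F$, hence $F\cap F_1\supseteq S$. The argument splits on whether $F\cap F_1\in\M$.

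When $F\cap F_1\notin\M$, the minimality of $\delta$ forces $\delta(F,F_1)\ge\delta$, which I rearrange to
\[
  f(F)\;\ge\;f(S)+\delta+\bigl[f(F\cap F_1)-f(S)\bigr]+\bigl[f(F\cup F_1)-f(F_1)\bigr].
\]
Both bracketed terms are non-negative; to contradict $f(F)\le f(S)+\delta$ I need one to be strictly positive. Either $F\cap F_1\supsetneq S$, in which case the first term is positive because both sets are flats, or $F\cap F_1=S$, in which case $F\not\subseteq F_1$ (otherwise $F=F\cap F_1=S\notin\M$), and $F_1$ being a flat then forces $f(F\cup F_1)>f(F_1)$.

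The main obstacle is the case $F\cap F_1\in\M$, where $\delta$-minimality is not directly available on $(F,F_1)$. Here I invoke the rank-minimality of $F$: if $F\cap F_1\subsetneq F$, then $F\cap F_1$ is a flat in $\M$ of strictly smaller rank than $F$ (by the flat property of $F\cap F_1$), hence is not a counterexample, so $f(F\cap F_1)>f(S)+\delta\ge f(F)$, contradicting $F\cap F_1\subseteq F$. Therefore $F\subseteq F_1$; combining this with (a) and $F_1\cap F_2=S$ gives $F\cap F_2=S\notin\M$, and applying the previous paragraph to $(F,F_2)$ in place of $(F,F_1)$ yields the final contradiction.
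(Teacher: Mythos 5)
Your proof is correct and uses essentially the same strategy as the paper: exploit $\delta$-minimality on a pair $(F,G)$ with $G\in\{F_1,F_2\}$ and $F\cap G\notin\M$, combined with the flat property to get $f(F\cup G)>f(G)$. The paper chooses $F$ of minimum rank over all of $\M$ rather than over counterexamples, which makes your Case~2 unnecessary (any proper flat subset of such $F$ is automatically outside $\M$, and one of $F_1,F_2$ is automatically incomparable to $F$), yielding a slightly shorter direct argument; otherwise the underlying computation is identical.
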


\begin{proof}
As $S$ is a flat, and both $F_1$ and $F_2$ are elements of the principal cut
generated by $S$, every element of $\M(F_1,F_2)$ is there. This
proves (a). For (b) choose $F\in\M$ with $f(F)-f(S)$ minimal. 
Then either $F_1$ or $F_2$ is incomparable to $F$. Indeed, if $F$ is equal
to one of them, then the other one works. Otherwise neither $F_1$ nor $F_2$
is below $F$, and $F$ cannot be below both $F_1$ and $F_2$. So let $G$ be
either $F_1$ or $F_2$ such that $F$ and $G$ are incomparable. 
The intersection $F\cap G$ is not in $\M$ (since $f(F\cap
G)<f(F)$ and then $f(F)-f(S)$ cannot be minimal), $S\subseteq F\cap G$ by
a) since $F,G\in\M$, and
\begin{align*}
  \delta&\le \delta(F,G) = {}\\
        &=\big(f(F)-f(F\cap G)\big) - \big(f(F\cup G)-f(G)\big) <{}\\
        &<f(F)-f(F\cap G)\le f(F)-f(S),
\end{align*}
since the flat $\cl(F\cup G)$ is different from $G$. This proves the claim.
\end{proof}

The following example illustrates a non-principal modular cut. The
polymatroid is defined on three elements $\{a,b,c\}$. The rank function is
$f(a)=f(b)=f(c)=2$, $f(ab)=3$, and the rank of all other subsets are $4$. The 
modular cut $\M=\{a,b,ab,abc\}$ is generated by $a$ and $b$, and
$\delta(\M)=\delta(a,b)=2+2-3=1$. This is a counterexample to the
polymatroid version of \cite[Lemma 2]{hoch-wilhelmi} as $ab$ is the 
only hyperplane in $\M$ and it
is not part of any pair with positive modular defect.

\subsection{Extensions, factors, contracts}\label{subsec:extension}

The polymatroid $(g,N)$ is an \emph{extension} of $(f,M)$ if $N\supseteq M$,
and $f(A)=g(A)$ for all $A\subseteq M$. This is a \emph{one-point extension}
if $N{-}M$ has a single element. Similarly to the matroid case, there is a 
strong connection between modular cuts and one-point extensions, but this is
not a one-to-one connection.

\smallskip

The \emph{sticky matroid conjecture}, due to Poljak and Turzik
\cite{poljak-turzik}, concerns the question whether any two extension of a
matroid can be glued together along their common part -- such an extension
is called their \emph{amalgam}. Matroids with this property are
called \emph{sticky}. Flat-modular matroids are sticky, see \cite{oxley}, the
proof generalizes to polymatroids as well. The conjecture is that no other
(poly)matroids are sticky. Interestingly, the conjecture for matroids and
polymatroids are not equivalent: matroids $M_1$ and $M_2$ might have a
joint polymatroid extension but not a matroid extension; alas, no such a
pair of matroids is known. Nevertheless
techniques attacking the problem give similar results for both cases.

\smallskip

Let $\cong$ be an equivalence relation on $M$. Let $N=M/{\cong}$ be the
collection of equivalence classes, and $\phi:M\to N$ be the map which
assigns to each element $i\in M$ its equivalence class.
The \emph{factor of $(f,M)$ by $\cong$}, denoted as $(f,M)/{\cong}$,
is the pair $(g,N)$ where $g$ assigns the value $g: A\mapsto f(\phi^{-1}(A))$
to each $A\subseteq N$. The factor of a factor is
also a factor, thus typically it is enough to consider equivalence relations
with only one non-singleton class. Any factor of a polymatroid
is also a polymatroid. By a result of Helgason \cite{helgason} every integer
polymatroid is a factor of a matroid. This claim follows from Theorem
\ref{thm:factor-extension} of Section \ref{sec:applications}, which also 
implies that factors of sticky polymatroids are sticky.

\smallskip

Let $X\subset M$. The \emph{contract of ($f,M)$ along $X$}, denoted as
$(f,M)\contr X$, is the pair $(g,M{-}X)$ defined for $A\subseteq M{-}X$ as
$$
    g: A \mapsto f(AX)-f(X).
$$
The contract of a polymatroid is a polymatroid. By Corollary
\ref{corr:contracts-are-sticky} contracts of sticky polymatroids are
sticky. Instead taking the contract directly, one can first compute the
factor with the subset $X$ as the only non-singleton class, and then take
the contract where $X$ is a singleton. This approach will be followed in
Section \ref{sec:applications}.

\subsection{Our contribution}\label{subsec:contribution}

Theorem 9 in \cite{hoch-wilhelmi} states that if the matroid $M$ has a
non-principal modular cut, then it is not sticky. 
Our main result is a generalization to polymatroids.
\begin{theorem}\label{thm:main-result}
A polymatroid with a non-principal modular cut is not sticky.
\end{theorem}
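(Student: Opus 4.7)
The plan is to invoke Lemma \ref{lemma:mod-cut-estimated} to reduce to the case of a minimal non-principal modular cut $\M = \M(F_1, F_2)$ with $S = F_1 \cap F_2 \notin \M$ and $\delta = \delta(F_1, F_2) > 0$, and then to construct two one-point extensions of $(f, M)$ that have no common amalgam.

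The extensions are built via the convolution construction announced in the abstract, applied to the flat lattice of $(f, M)$ and an appropriately chosen discrete measure. Each extension adjoins a new element (call them $p$ and $q$) of rank $\delta$ that lies in the closure of every flat $F \in \M$ yet contributes full rank $\delta$ to $S$, so that $g_1(S \cup p) = f(S) + \delta$ and $g_2(S \cup q) = f(S) + \delta$. A naive piecewise definition of such rank functions (rank $f(A)$ when $\cl(A) \in \M$, rank $f(A) + \delta$ otherwise) is submodular but, as a quick check shows, admits a trivial ``parallel'' amalgam with $h(p \cup q) = \delta$, so cannot obstruct stickiness on its own. The convolution is what encodes the extra rigidity on subsets straddling the cut, and the strict inequality $f(F) - f(S) > \delta$ from Lemma \ref{lemma:mod-cut-estimated}(b)---the quantitative expression of non-principality---is precisely what makes the convolution output a polymatroid with these target values.

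To rule out a common amalgam $(h, M \cup \{p, q\})$, I would first observe that for every $F \in \M$, submodularity on $F \cup p$ and $F \cup q$ combined with monotonicity forces $h(F \cup p \cup q) = f(F)$. A second application of submodularity, to $F_1 \cup p \cup q$ and $F_2 \cup p \cup q$, then yields the upper bound
\[
  h(S \cup p \cup q) \ \le\ f(F_1) + f(F_2) - f(F_1 \cup F_2) \ =\ f(S) + \delta.
\]
The contradiction must come from a strict matching lower bound $h(S \cup p \cup q) > f(S) + \delta$, which the convolution extensions---but not the naive ones---impose by constraining $h$ on intermediate subsets $A \subseteq M$ with $f(A)$ just above $f(S)$. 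Auxiliary flats $F \in \M$ with $f(F) - f(S)$ close to $\delta$ (whose existence is precisely what Lemma \ref{lemma:mod-cut-estimated}(b) both guarantees and bounds) are what make these extra constraints bite.

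The main obstacle is the construction step: proving that the convolution output is a polymatroid and that it carries enough extra rigidity beyond the naive extension to rule out the parallel amalgam. Once the extensions are in hand, the amalgam argument is a short sequence of submodular inequalities. The entire proof pivots on Lemma \ref{lemma:mod-cut-estimated}(b): in a principal modular cut the slack $f(F) - f(S) - \delta$ would vanish and the two extensions would amalgamate parallelly, exactly as for the naive ones.
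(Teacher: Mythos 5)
Your proposal has a fundamental architectural flaw that no amount of cleverness in the convolution step can repair: you propose \emph{two isomorphic one-point extensions}, adjoining $p$ and $q$ respectively with the same rank behaviour, and you want to show they have no amalgam. But any two one-point extensions that are isomorphic via $p \leftrightarrow q$ \emph{always} admit the parallel amalgam: define $h(A) = g_1(A)$ for $A\subseteq M\cup\{p\}$, $h(A)=g_2(A)$ for $A\subseteq M\cup\{q\}$, and $h(A\cup\{p,q\})=g_1(A\cup p)$ for $A\subseteq M$. This is exactly the polymatroid obtained from $(g_1, M\cup\{p\})$ by adjoining $q$ parallel to $p$, and it is a polymatroid regardless of whether $g_1$ came from the naive piecewise definition or from the convolution. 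The ``extra rigidity'' you hope the convolution imposes lives entirely inside the rank function of the one-point extension, and the parallel amalgam restricts to that rank function correctly by construction. Your acknowledged gap --- proving the strict lower bound $h(S\cup p\cup q) > f(S)+\delta$ --- is not a missing lemma; it is a false statement for the parallel amalgam, which achieves equality.

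The paper's actual proof avoids this trap by making the two extensions structurally different and by invoking a genuinely non-Shannon ingredient. It builds a \emph{one-point} extension $(f_1, aM)$ that extracts the conditional common information, $\COMM(F_1,F_2;a\,|\,S)=0$ (Lemma~\ref{lemma:extract-common-info}), and a \emph{two-point} extension $(f_2, uvM)$ with a negative conditional Ingleton value, $\ING(F_1,F_2;u,v\,|\,S)<0$ (Lemma~\ref{lemma:negative-ingleton}). Any common amalgam would contain all of $a,u,v$ and would therefore violate the inequality $\ING(A,B;P,Q\,|\,E)+\COMM(A,B;Y\,|\,E)\ge 0$ of Lemma~\ref{lemma:cond-inequality}. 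That inequality is what the paper calls the engine ``behind all existing proofs that a (poly)matroid is not sticky''; pure submodularity alone, which is what your amalgam argument draws on, cannot produce the contradiction, precisely because the parallel amalgam is always submodular. If you want to salvage your approach, the missing idea is to separate the roles: one extension should certify the common-information extraction, the other should add the Ingleton-violating pair, and the obstruction must come from the conditional Ingleton/common-information inequality rather than from a putative lower bound on $h(S\cup p\cup q)$.
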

\noindent
Given such a polymatroid we construct two extensions which have no amalgam.
If we start from a matroid, then the extensions are integer valued -- thus
can be extended further to be matroids which still have no amalgam, yielding
the quated result from \cite{hoch-wilhelmi}, see Remark 1. The main
technique is convolution of a ranked lattice and a (discrete) measure
defined in Section \ref{sec:convolution}.

\medskip

The rest of the paper is organized as follows. Section
\ref{sec:cyclic-flats} proves properties of cyclic flats which will be used
later. The convolution of a ranked lattice and a measure is defined in
Section \ref{sec:convolution} along with the proof of some properties of
the convolution. The method is used to prove the existence of several
interesting extensions in Section \ref{sec:applications}. The proof
of Theorem \ref{thm:main-result} is given in Section \ref{sec:main-theorem},
and Section \ref{sec:conclusion} concludes the paper.

%%%%%%%%%%%%%%%%%%%%%%%%%%%%%%%%%%%%%%%%%%%%%%%%%%%%%%%%%%%%%%%%%%%%%%%%%%%%%%%
\section{Cyclic flats}\label{sec:cyclic-flats}

A flat $C$ of the polymatroid $(f,M)$ is \emph{cyclic} if for all $i\in C$ either
$f(i)=0$ or $f(C)-f(C{-}i)<f(i)$. As remarked in Section
\ref{subsec:polymatroid}, for
matroids this definition is equivalent to the condition that $C$ is a union of cycles.
Cyclic flats turned out to be very useful in proving that certain matroids
are not sticky \cite{bonin}. As cyclic flats are not part of the standard
repertoire of polymatroids, properties used later are proved here.

\begin{lemma}\label{lemma:basic-cyclic-flats}
Every flat $F\subseteq M$ contains a unique maximal cyclic flat $C\subseteq F$.
Moreover, for every $C\subseteq A\subseteq F$ we have
$$
    f(A) = f(C)+\mu_f(A{-}C).
$$
\end{lemma}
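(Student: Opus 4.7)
The plan is to construct the maximal cyclic flat $C\subseteq F$ by iteratively stripping away elements of $F$ that contribute their full rank. The crux is the following flat-preservation step: if $F$ is a flat and $i\in F$ satisfies $f(F)-f(F-i)=f(i)>0$, then $F-i$ is again a flat. To verify this, for any $j\notin F$ (hence $j\ne i$), submodularity applied to $F$ and $(F-i)\cup j$ yields
\[
  f((F-i)\cup j)\ge f(F\cup j)-f(i)>f(F)-f(i)=f(F-i),
\]
where the strict inequality is the flat condition on $F$; and for $j=i$, the hypothesis $f(i)>0$ gives $f(F)>f(F-i)$. Thus $\cl(F-i)=F-i$.

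With this step in hand, I iterate: starting from $F=F_0$, at each stage pick any $i_{j+1}\in F_j$ with $f(F_j)-f(F_j-i_{j+1})=f(i_{j+1})>0$ and set $F_{j+1}=F_j-i_{j+1}$. The chain terminates at a flat $C=F_k$ in which every element either has zero rank or fails the full-contribution condition, so $C$ is cyclic by definition. For uniqueness and maximality, I show by induction on $j$ that every cyclic flat $C'\subseteq F$ lies in $F_j$. In the inductive step, assuming for contradiction that $i_{j+1}\in C'$, diminishing returns applied to $C'-i_{j+1}\subseteq F_{j+1}$ and $i_{j+1}\notin F_{j+1}$ gives
\[
  f(C')-f(C'-i_{j+1})\ge f(F_j)-f(F_{j+1})=f(i_{j+1}),
\]
contradicting the cyclic condition on $C'$. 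Hence $i_{j+1}\notin C'$, so $C'\subseteq F_{j+1}$, and eventually $C'\subseteq F_k=C$.

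For the rank formula, set $A_j=A\cap F_j$, so that $A_0=A$ and $A_k=C$, and telescope $f(A)-f(C)=\sum_{j}\bigl(f(A_j)-f(A_{j+1})\bigr)$. When $i_{j+1}\in A$, the same diminishing-returns argument gives $f(A_j)-f(A_{j+1})\ge f(i_{j+1})$ while direct submodularity gives the reverse bound, pinning the increment to exactly $f(i_{j+1})$; when $i_{j+1}\notin A$, $A_j=A_{j+1}$ and the increment is zero. Summing yields $f(A)-f(C)=\mu_f(A-C)$. The only genuinely delicate step is the flat-preservation claim in the first paragraph; everything downstream is routine submodular bookkeeping.
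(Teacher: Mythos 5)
Your proof is correct and follows the same iterative stripping construction as the paper, but organizes the details differently in two useful ways. First, for uniqueness, the paper runs a second, canonical chain that removes \emph{all} full-rank contributors at once and shows its fixed point is cyclic and contains every cyclic flat; you instead work with the same (arbitrary) chain that builds $C$, showing by induction that every cyclic flat $C'\subseteq F$ survives each removal (the diminishing-returns contradiction). This is arguably cleaner since it requires only one chain and shows directly that the endpoint is independent of the removal order. Second, for the rank formula $f(A)=f(C)+\mu_f(A-C)$, the paper proves only the single-element identity $f(F_j x_k)=f(F_j)+f(x_k)$ and leaves the passage to arbitrary $A$ with $C\subseteq A\subseteq F$ implicit; your telescoping of $A_j=A\cap F_j$, with the two-sided bound pinning each increment to exactly $f(i_{j+1})$ or $0$, fills that gap explicitly. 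Your standalone flat-preservation lemma (that removing a full-rank contributor from a flat yields a flat) is also stated more locally than the paper's version, which argues it in one pass via $f(F_jx_k)=f(F_j)+f(x_k)$ for all earlier $k$. Net effect: same idea, slightly more modular and complete exposition.
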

\begin{proof}
First we show that $F$ contains a maximal cyclic flat with the given property, then
we show that the maximal cyclic flat inside $F$ is unique.

Start with $F_1=F$. For $F_j$ with $j\ge 1$, if there is an element of
$x_j\in F_j$ such that $f(x_j)>0$ and $f(F_j)=f(x_j)+f(F_j{-}x_j)$, then let
$F_{j+1}=F_j{-}x_j$, otherwise stop. Submodularity implies that for each
$k<j$ $f(F_jx_k) = f(F_j)+f(x_k)$, thus all $F_j$ is a flat (as it is
contained inside $F$). It is also clear that the last $F_j$ is cyclic. As no
proper extension of $F_j$ inside $F$ is cyclic, it is a maximal cyclic flat
with the desired property.

Second, suppose $C\subseteq F$ is a maximal cyclic flat. If $i\in C$ with
$f(i)>0$ then $f(F)-f(F{-}i)\le f(C)-f(C{-}i) < f(i)$. Consequently, $C$ is
a subset of $F_1=\{i\in F: f(i)=0$ or $f(F)-f(F{-}i)<f(i)\}$. Define
similarly the sets $F_1\supseteq F_2\supseteq\cdots$. It is clear that all
$F_j$ is a flat, and when $F_j=F_{j+1}$ then it is also cyclic. As it
contains $C$, it must be equal to $C$.
\end{proof}

\begin{claim}\label{claim:cyclic-lattice}
The cyclic flats of the polymatroid $(f,M)$ form a sublattice of the lattice
of all subsets of $M$.
\end{claim}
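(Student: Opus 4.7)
The plan is to interpret the claim as asserting that the cyclic flats, ordered by inclusion from $2^M$, form a lattice, where join and meet are definable via the $2^M$ operations but need not literally equal $\cup$ and $\cap$ (cyclic flats are not closed under $\cap$ in general, as a single non-loop element may sit in the intersection of two cyclic flats without itself being cyclic). For cyclic flats $C_1,C_2$, I will take the join to be $C_1\vee C_2=\cl(C_1\cup C_2)$ and the meet $C_1\wedge C_2$ to be the unique maximal cyclic flat sitting inside the flat $C_1\cap C_2$. The meet is furnished immediately by Lemma~\ref{lemma:basic-cyclic-flats} applied to $C_1\cap C_2$ (which is a flat, as the intersection of flats is a flat); its uniqueness guarantees that it contains every cyclic flat inside $C_1\cap C_2$, so it really is the greatest lower bound in the poset of cyclic flats.

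The substantive step is to verify that $D=\cl(C_1\cup C_2)$ is itself cyclic. That $D$ is a flat is automatic. Fix $i\in D$ with $f(i)>0$. If $i\notin C_1\cup C_2$, then $D-i\supseteq C_1\cup C_2$, so $f(D-i)\ge f(C_1\cup C_2)=f(D)$ (the last equality because rank is preserved under closure), and hence $f(D)-f(D-i)=0<f(i)$. If instead $i\in C_1$ (the case $i\in C_2$ is symmetric), submodularity applied to $D-i$ and $C_1$ gives
$$
f(D-i)+f(C_1)\;\ge\;f\bigl((D-i)\cup C_1\bigr)+f\bigl((D-i)\cap C_1\bigr)\;=\;f(D)+f(C_1-i),
$$
using $(D-i)\cup C_1=D$ (because $C_1\subseteq D$ reintroduces $i$) and $(D-i)\cap C_1=C_1-i$. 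Rearranging and invoking the cyclicity of $C_1$ at the element $i$ yields $f(D)-f(D-i)\le f(C_1)-f(C_1-i)<f(i)$, as required.

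The main (and really the only) obstacle is the submodularity manipulation in this second case; the first case and the definition of the meet are essentially bookkeeping supported by Lemma~\ref{lemma:basic-cyclic-flats}. Once both operations are seen to be well-defined within the class of cyclic flats, the lattice axioms follow routinely from idempotence of closure and the corresponding identities for intersection in $2^M$, so no additional computation is needed.
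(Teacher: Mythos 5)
Your proof is correct and follows essentially the same route as the paper: you identify the meet as the maximal cyclic flat inside $C_1\cap C_2$ via Lemma~\ref{lemma:basic-cyclic-flats} and the join as $\cl(C_1\cup C_2)$, then verify cyclicity of the join by the same two-case split, with the paper leaving the submodularity step to the reader and you spelling it out explicitly.
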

\begin{proof}
Let $C_1$ and $C_2$ be cyclic flats. Then $C_1\cap C_2$ is a flat, which
contains a unique largest cyclic flat by Lemma
\ref{lemma:basic-cyclic-flats} above. This is the largest lower bound of $C_1$ and
$C_2$. The smallest upper bound is $C=\cl_f(C_1\cup C_2)$. First, this is
clearly a flat. Second, this is cyclic: if $i\in C{-}(C_1\cup C_2)$, 
then $f(C)-f(C{-}i)=0$. If, say, 
$i\in C_1$ and $f(i)>0$, then submodularity gives $f(C)-f(C{-}i)\le
f(C_1)-f(C_1{-}i)<f(i)$ proving that this closure is indeed the smallest cyclic
flat containing both $C_1$ and $C_2$.
\end{proof}

%%%%%%%%%%%%%%%%%%%%%%%%%%%%%%%%%%%%%%%%%%%%%%%%%%%%%%%%%%%%%%%%%%%%%%%%%%%%%%%
\section{Ranked lattices and convolution}\label{sec:convolution}

Let $\LL$ be a sublattice of the lattice of the subsets of $M$. The join and
meet of $Z_1,Z_2\in\LL$ is denoted by $Z_1\lor Z_2\supseteq Z_1\cup Z_2$ and 
$Z_1\land Z_2\subseteq Z_1\cap Z_2$, respectively (the ordering in $\LL$ 
is inherited from the subset relation).
We write $Z_1<Z_2$ if $Z_1$ is strictly below $Z_2$, and $Z_1\incmp Z_2$ if
$Z_2$ and $Z_2$ are
incomparable (none of them is below the other). The lattice $\LL$ is
\emph{ranked} if each element $Z\in\LL$ has a non-negative rank $r(Z)$. 
Elements of $\LL$ are typically denoted by 
$Z$ (with or without indices), and the lattice rank function by
$r$.

Given the ranked lattice $(r,\LL)$ and the measure $\mu$, their
\emph{convolution}, denoted by $r*\mu$,
assigns a non-negative value to subsets of $M$ as follows:
\begin{equation}\label{eq:convolution}
  r*\mu \,:\,    A \mapsto \min\,\{\: r(Z)+\mu(A{-}Z)\,:\, Z\in\LL \}.
\end{equation}
Typically we write $r'$ in place of $r*\mu$.
When $\LL$ contains all subsets of $M$ and
$(r,M)$ is a polymatroid, then (\ref{eq:convolution}) is equivalent to
the usual definition of the convolution of a polymatroid and a modular
polymatroid, see \cite{Lovasz,M.entreg}.
\begin{theorem}\label{thm:convolution}
Let $(r,\LL)$ be a ranked lattice and $\mu$ be a measure on $M$. Suppose
\begin{equation}\label{eq:convolution.1}
   r(Z_1)+r(Z_2)\ge r(Z_1\land Z_2)+r(Z_1\lor Z_2) + \mu(Z_1\cap
Z_2{-}Z_1\land Z_2)
\end{equation}
for every pair of incomparable elements $Z_1,Z_2\in\LL$.
Then the convolution $r'=r*\mu$ defines a polymatroid on $M$.
\end{theorem}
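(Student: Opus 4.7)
The plan is to verify the three polymatroid axioms for $r' = r*\mu$. Non-negativity is immediate since $r(Z)\ge 0$ and $\mu(A-Z)\ge 0$ for every $Z\in\LL$. Monotonicity is equally easy: if $A\subseteq B$ then $\mu(A-Z)\le\mu(B-Z)$ for every fixed $Z$, and this pointwise bound transfers to the minima defining $r'(A)$ and $r'(B)$.

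The real content is submodularity. Given $A,B\subseteq M$, I would pick $Z_1,Z_2\in\LL$ achieving the minima for $r'(A)$ and $r'(B)$, so that $r'(A)+r'(B)=r(Z_1)+r(Z_2)+\mu(A-Z_1)+\mu(B-Z_2)$. Using $Z_1\land Z_2$ and $Z_1\lor Z_2$ as (possibly suboptimal) test elements for $A\cap B$ and $A\cup B$ bounds $r'(A\cap B)+r'(A\cup B)$ from above by $r(Z_1\land Z_2)+r(Z_1\lor Z_2)+\mu((A\cap B)-(Z_1\land Z_2))+\mu((A\cup B)-(Z_1\lor Z_2))$. If $Z_1$ and $Z_2$ are comparable the hypothesis (\ref{eq:convolution.1}) holds trivially (both sides collapse to $r(Z_1)+r(Z_2)$, and the measure term vanishes); otherwise it is assumed. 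Applying it to eliminate the $r$-terms reduces submodularity to the purely measure-theoretic statement
\begin{equation*}
\mu\bigl(Z_1\cap Z_2-(Z_1\land Z_2)\bigr)+\mu(A-Z_1)+\mu(B-Z_2)\ge\mu\bigl((A\cap B)-(Z_1\land Z_2)\bigr)+\mu\bigl((A\cup B)-(Z_1\lor Z_2)\bigr).
\end{equation*}

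I would prove this pointwise: since $\mu$ is additive and $\mu(x)\ge 0$, it suffices to verify the inequality with $\mu$ replaced by the indicator of a single element $x\in M$, and then sum. A short case split on where $x$ sits in the chain $Z_1\land Z_2\subseteq Z_1\cap Z_2\subseteq Z_1\cup Z_2\subseteq Z_1\lor Z_2$, combined with its membership in $A$ and $B$, handles each element. The only case in which the leftmost term on the left-hand side is actually needed occurs when $x\in Z_1\cap Z_2-(Z_1\land Z_2)$ and $x\in A\cap B$; every other configuration collapses to trivial $0$--$1$ arithmetic (for instance, when $x\notin Z_1\lor Z_2$ the two sides reduce to $\mu(A\cup B)$ on each side). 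The main obstacle I expect is nothing deeper than disciplined bookkeeping through these cases, taking care that $Z_1\land Z_2$ may be a proper subset of $Z_1\cap Z_2$ and $Z_1\lor Z_2$ a proper superset of $Z_1\cup Z_2$, which is precisely what makes the extra measure term in (\ref{eq:convolution.1}) necessary.
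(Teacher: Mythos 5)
Your proof is correct and follows essentially the same strategy as the paper: bound $r'(A\cap B)$ and $r'(A\cup B)$ using $Z_1\land Z_2$ and $Z_1\lor Z_2$ as test elements, invoke the lattice-rank hypothesis to cancel the $r$-terms, and reduce submodularity to a measure inequality. The only cosmetic differences are that you handle the comparable and incomparable cases uniformly (by noting that (\ref{eq:convolution.1}) degenerates to an equality when $Z_1,Z_2$ are comparable) and verify the final measure inequality directly by element-wise case analysis, whereas the paper first isolates the auxiliary inequality (\ref{eq:sub-basic}) and then deduces the needed bound from it via two set inclusions.
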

\begin{proof}
First observe that for arbitrary subsets $A$, $B$, $Z_A$, $Z_B$ we have
\begin{equation}\label{eq:sub-basic}
\mu(A{-}Z_A)+\mu(B{-}Z_B) \ge
 \mu(A\cap B{-}Z_A\cap Z_B) + \mu(A\cup B{-}Z_A\cup Z_B).
\end{equation}
Indeed, if $i\in A\cap B{-}Z_A\cap Z_B$, then $i$ is in both $A$ and $B$,
and not in either $Z_A$ or $Z_B$, thus $i$ is an element of either $A{-}Z_A$
or $B{-}Z_B$. Similarly, if $i\in A\cup B{-}Z_A\cup Z_B$, then $i$ is not in
$Z_A$ and not in $Z_B$, but it is in either $A$ or $B$, thus it also appears
on the left hand side. Finally, if $i$ is a member of both sets on the right
hand side, then $i$ is
in $A\cap B$, and $i$ is not is $Z_A\cup Z_B$, thus $i$ is in both sets on
the left hand side.

Clearly, the convolution $r'$ takes non-negative values only. Now suppose
$r'(A)=r(Z_A)+\mu(A{-}Z_A)$ and $r'(B)=r(Z_B)+\mu(B{-}Z_B)$. Monotonicity of
$r'$ follows from the fact that for $A\subseteq B$,
$$
   r'(A) \le r(Z_B)+\mu(A{-}Z_B) \le r(Z_B)+\mu(B{-}Z_B) = r'(B).
$$
To check submodularity, consider first the case when $Z_A<Z_B$. In this case
\begin{align*}
   r'(A\cap B) &\le r(Z_A)+\mu(A\cap B{-}Z_A) = r(Z_A)+\mu(A\cap B{-}Z_A\cap
Z_B), \\
   r'(A\cup B) &\le r(Z_B)+\mu(A\cup B{-}Z_B) = r(Z_B)+\mu(A\cup B{-}Z_A\cup
Z_B),
\end{align*}
thus $r'(A)+r'(B)\ge r'(A\cap B)+r'(A\cup B)$ follows from
(\ref{eq:sub-basic}). When $Z_A$ and $Z_B$ are incomparable, we use
$Z_A\land Z_B$ and $Z_A\lor Z_B$ to estimate $r'(A\cap B)$ and $r'(A\cup
B)$, respectively as follows:
\begin{align*}
  r'(A\cap B) &\le r(Z_A\land Z_B) + \mu(A\cap B{-}Z_A\land Z_B), \\
  r'(A\cup B) &\le r(Z_A\lor Z_B) + \mu(A\cup B{-}Z_A\lor Z_B).
\end{align*}
Using condition (\ref{eq:convolution.1}) the submodularity follows if we
have
\begin{align*}
 &  \mu(A{-}Z_A)+\mu(B{-}Z_B) + \mu(Z_A\cap Z_B{-}Z_A\land Z_B) \\
 & ~~~~~~~~~ \ge  \mu(A\cap B{-}Z_A\land Z_B) +  \mu(A\cup B{-}Z_A\lor Z_B) .
\end{align*}
As $\mu(A\cap B{-}Z_A\land Z_B)\le \mu( A\cap B{-}Z_A\cap Z_B\big)
+ \mu(Z_A\cap Z_B{-}Z_A\land Z_B)$ (this is disjoint union), and $\mu(A\cup
B{-}Z_A\lor Z_B) \le \mu(A\cup B{-}Z_A\cup Z_B)$ (as $Z_A\cup Z_B$ is a
subset of $Z_A\lor Z_B$), inequality (\ref{eq:sub-basic}) gives the claim.
\end{proof}

The following lemma gives conditions for the convolution to extend the
lattice rank function. Then we look at the case when the lattice rank is
defined partially from a polymatroid, and under what conditions will the
convolution keep the polymatroid rank.

\begin{lemma}\label{lemma:keep-lattice-rank}
Let $(r,\LL)$ be a ranked lattice and $\mu$ be a measure on $M$. Assume that
for $Z_1,Z_2\in\LL$, if $Z_1<Z_2$ then
\begin{equation}\label{eq:convolution.2}
   0\le r(Z_2)-r(Z_1) \le \mu(Z_2{-}Z_1);
\end{equation}
and if $Z_1$ and $Z_2$ are incomparable, then
{\upshape(\ref{eq:convolution.1})} holds. Then $(r',M)$ is a polymatroid,
and $r'(A)=r(A)$ for every $A\in\LL$.
\end{lemma}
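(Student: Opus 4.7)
The plan is as follows. The first assertion---that $(r', M)$ is a polymatroid---is immediate from Theorem~\ref{thm:convolution}, whose hypothesis for incomparable lattice elements is exactly condition~(\ref{eq:convolution.1}). So all the work lies in verifying $r'(A) = r(A)$ for every $A \in \LL$.

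The inequality $r'(A) \le r(A)$ is obtained by taking $Z = A$ in~(\ref{eq:convolution}), since $\mu(A - A) = 0$. For the reverse inequality, I would fix $A \in \LL$ and show
\[
  r(Z) + \mu(A - Z) \ge r(A) \qquad \text{for every } Z \in \LL,
\]
splitting into three cases according to the position of $Z$ relative to $A$ in $\LL$. If $A \le Z$, then $A - Z = \emptyset$ and monotonicity of $r$---the left-hand inequality of~(\ref{eq:convolution.2})---yields $r(Z) \ge r(A)$. If $Z \le A$, the right-hand inequality of~(\ref{eq:convolution.2}) gives $r(A) - r(Z) \le \mu(A - Z)$ directly.

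The interesting case is $Z \incmp A$. I would apply~(\ref{eq:convolution.1}) to the pair $(Z, A)$ and discard $r(Z \lor A) \ge r(A)$ by monotonicity to obtain
\[
  r(Z) \ge r(Z \land A) + \mu(Z \cap A - Z \land A).
\]
Incomparability forces $Z \land A < A$ strictly, so~(\ref{eq:convolution.2}) applies to $Z \land A \le A$ and yields $r(Z \land A) \ge r(A) - \mu(A - Z \land A)$. Adding $\mu(A - Z)$ to both sides of the previous display and substituting this lower bound reduces the claim to the elementary identity
\[
  \mu(A - Z \land A) = \mu(A - Z) + \mu(Z \cap A - Z \land A),
\]
which is additivity of $\mu$ on the disjoint decomposition $A \setminus (Z \land A) = (A \setminus Z) \sqcup (A \cap Z \setminus Z \land A)$, valid because $Z \land A \subseteq Z \cap A \subseteq A$.

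The main obstacle is the incomparable case, where one must orchestrate~(\ref{eq:convolution.1}), both halves of~(\ref{eq:convolution.2}), and the additivity of $\mu$ on disjoint unions so that all the slack terms cancel exactly. The comparable cases follow instantly from the two halves of~(\ref{eq:convolution.2}).
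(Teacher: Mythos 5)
Your proposal is correct and follows essentially the same route as the paper: both treat the comparable cases via the two halves of~(\ref{eq:convolution.2}), and both handle the incomparable case by combining~(\ref{eq:convolution.1}) with~(\ref{eq:convolution.2}) applied to $Z\land A < A$ together with the disjoint decomposition $A - (Z\land A) = (A-Z) \sqcup (A\cap Z - Z\land A)$. The only cosmetic difference is that you discard the $r(Z\lor A)$ term via monotonicity at the outset, whereas the paper keeps it, first derives $r(A\lor Z)\le r(Z)+\mu(A-Z)$, and then invokes $r(A)\le r(A\lor Z)$ at the end.
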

\begin{proof}
By Theorem \ref{thm:convolution} $(r',M)$ is a polymatroid, thus we focus on
the second claim.
As $A\in\LL$ it is clear that $r'(A)\le r(A)$, thus we need to show $r(A)\le
r(Z)+\mu(Z{-}A)$ for every $Z\in\LL$. If $A<Z$ then (\ref{eq:convolution.2})
gives $r(A)\le r(Z)$. If $Z<A$ then $r(A)\le r(Z)+\mu(Z{-}A)$ again by
(\ref{eq:convolution.2}). Thus we can assume $A$ and $Z$ are incomparable. As
$r(A)\le r(A\lor Z)$ by (\ref{eq:convolution.2}), it is enough to show that
$r(A\lor Z)\le r(Z)+\mu(A{-}Z)$. Applying (\ref{eq:convolution.2}) to $A$
and $A\land Z$, and (\ref{eq:convolution.1}) to $A$ and $Z$ we get
\begin{align*}
   r(A) & \le r(A\land Z)+\mu(A{-}A\land Z), \\
 r(A\land Z)+r(A\lor Z) &\le r(A)+r(Z) - \mu(A\cap Z{-}A\land Z).
\end{align*}
Adding them up we get the required inequality.
\end{proof}

\begin{lemma}\label{lemma:conv-extend-poly}
Let $(f,M)$ be a polymatroid, $N\supseteq M$, and $(r,\LL)$ be a ranked lattice
on subsets of $N$. Suppose the following conditions hold:
\begin{align}
  &\mbox{for all $Z\in\LL$, $Z\cap
M\in\LL$,\hspace{0.5\textwidth}}\label{cond:1}\\
  &\mbox{all cyclic flats of $M$ are in $\LL$},\label{cond:2}\\
  &\mbox{if $Z_1<Z_2$ then $r(Z_1)\le r(Z_2)$},\label{cond:3} \\
  &\mbox{if $Z\in\LL$, $Z\subseteq M$ then $r(Z)=f(Z)$},\label{cond:4}\\
  &\mbox{for all $a\in M$ we have $\mu(a)=f(a)$}.\label{cond:5}
\end{align}
Then $r'(A)=f(A)$ for all $A\subseteq M$.
\end{lemma}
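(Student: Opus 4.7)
The plan is to verify the two inequalities $r'(A)\le f(A)$ and $r'(A)\ge f(A)$ separately. For the first I will exhibit a single convenient lattice element witnessing the minimum that defines $r'$; for the second I will bound $r(Z)+\mu(A-Z)$ from below by $f(A)$ for every $Z\in\LL$.

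For the upper bound the natural candidate is $Z=C$, where $C$ is the maximal cyclic flat of $(f,M)$ inside the closure $F=\cl_f(A)$, guaranteed to exist by Lemma~\ref{lemma:basic-cyclic-flats}. Condition~(\ref{cond:2}) places $C$ in $\LL$, condition~(\ref{cond:4}) gives $r(C)=f(C)$, and condition~(\ref{cond:5}) identifies $\mu$ with $\mu_f$ on subsets of $M$, so the candidate value is $f(C)+\mu_f(A-C)$. The obstacle here is that $C$ need not lie inside $A$, so Lemma~\ref{lemma:basic-cyclic-flats} cannot be applied to $A$ directly. I will sidestep this by invoking the lemma with $A\cup C$ in place of its $A$: since $C\subseteq A\cup C\subseteq F$, the formula yields $f(A\cup C)=f(C)+\mu_f(A-C)$, and the sandwich $A\subseteq A\cup C\subseteq\cl_f(A)$ collapses the left-hand side to $f(A)$. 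Hence $r'(A)\le r(C)+\mu(A-C)=f(A)$.

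For the lower bound, fix an arbitrary $Z\in\LL$ and replace it by $Z'=Z\cap M$, which lies in $\LL$ by condition~(\ref{cond:1}). Monotonicity of $r$ from condition~(\ref{cond:3}) combined with condition~(\ref{cond:4}) gives $r(Z)\ge r(Z')=f(Z')$, and since $A\subseteq M$ the differences $A-Z$ and $A-Z'$ coincide, so condition~(\ref{cond:5}) rewrites $\mu(A-Z)$ as $\mu_f(A-Z')$. What remains is the purely polymatroid-internal estimate $f(Z')+\mu_f(A-Z')\ge f(A)$, which follows from the subadditivity chain $f(A)\le f(Z'\cup A)\le f(Z')+f(A-Z')$ together with the standard bound $f(A-Z')\le\mu_f(A-Z')$ implied by submodularity.

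The only genuinely delicate point is the choice of witness in the upper bound: the maximal cyclic flat $C\subseteq\cl_f(A)$ might escape $A$, and the trick is to notice that $A\cup C$ still has rank $f(A)$ because it sits inside $\cl_f(A)$, so Lemma~\ref{lemma:basic-cyclic-flats} applies to $A\cup C$ and delivers the exact identity $f(A)=f(C)+\mu_f(A-C)$. Everything else is bookkeeping between $\mu$ and $\mu_f$ and between $Z$ and $Z\cap M$.
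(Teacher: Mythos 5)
Your proof is correct and follows essentially the same route as the paper: reduce the minimization to lattice elements $Z\subseteq M$ via conditions~(\ref{cond:1}), (\ref{cond:3}), establish $r'(A)\ge f(A)$ by the polymatroid subadditivity chain, and achieve equality with the maximal cyclic flat $C$ inside $\cl_f(A)$. You do, however, add a step the paper leaves implicit: Lemma~\ref{lemma:basic-cyclic-flats} gives $f(A)=f(C)+\mu_f(A{-}C)$ only when $C\subseteq A$, and $C$ can indeed escape $A$ (e.g.\ $A$ spans a larger cyclic flat); your device of applying the lemma to $A\cup C$ and then collapsing $f(A\cup C)=f(A)$ via $A\subseteq A\cup C\subseteq\cl_f(A)$ is exactly the missing justification, and is a genuine improvement in rigor over the paper's terse citation.
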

\begin{proof}
Condition (\ref{cond:1}) says that $\LL$ restricted to subsets of $M$ is a
ranked lattice. As $Z\cap M\in\LL$ is below $Z$, for $A\subseteq M$ we have
$$
    r(Z)+\mu(A{-}Z) \ge r(Z\cap M)+\mu(A{-}Z\cap M)
$$
by (\ref{cond:3}), thus the lattice element which minimizes
$r(Z)+\mu(A{-}Z)$ is a subset of $M$. For every $Z\subseteq M$,
$$
    f(A) \le f(Z)+f(A{-}Z) \le f(Z)+\mu_f(A{-}Z)=r(Z)+\mu(A{-}Z)
$$
by (\ref{cond:4}) and (\ref{cond:5}), which implies $r'(A)\ge f(A)$. To 
show that they are
equal, we need to exhibit a $Z\in\LL$ with equality here. This can be done by
condition (\ref{cond:2}) and Lemma \ref{lemma:basic-cyclic-flats}: 
choose $Z\in\LL$ to be the maximal cyclic flat inside $\cl(A)$.
\end{proof}

%%%%%%%%%%%%%%%%%%%%%%%%%%%%%%%%%%%%%%%%%%%%%%%%%%%%%%%%%%%%%%%%%%%%%%%%%%%%%%%%%
\section{Applications}\label{sec:applications}

In this section we show how lattice convolution can be used to create
polymatroid extensions with desired properties. Theorem
\ref{thm:factor-extension} essentially says that an extension of a
polymatroid factor
is a factor of an extension. Let $\cong$ be an equivalence
relation on $M$; this relation extends to any $N\supseteq M$ by stipulating that
elements of $N{-}M$ are equivalent to themselves only.

\begin{theorem}\label{thm:factor-extension}
Let $(f,M)$ be a polymatroid and $(f',M')$ be the factor $(f,M)/{\cong}$. Let
$(g',N')$ be an extension of $(f',M')$. Then there is an extension $(g,N)$
of $(f,M)$ such that $(g',N')$ is the factor $(g,N)/{\cong}$.
\end{theorem}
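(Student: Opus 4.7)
The plan is to construct $(g,N)$ as a polymatroid convolution and then verify the two desired properties via Lemmas~\ref{lemma:conv-extend-poly} and~\ref{lemma:keep-lattice-rank}. Set $N := M \cup (N' \setminus M')$, extend $\phi$ to $\phi : N \to N'$ by the identity on $N \setminus M$, and extend $\cong$ by declaring each element of $N \setminus M$ to form its own class, so that $\phi$ is the induced factor map.

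Take $\LL$ to consist of (i) every cyclic flat $C$ of $(f,M)$, ranked by $r(C) := f(C)$, and (ii) every preimage $\phi^{-1}(B)$ for $B \subseteq N'$, ranked by $r(\phi^{-1}(B)) := g'(B)$; these two definitions agree on the overlap because $g' \restr 2^{M'} = f'$ and $f'(B) = f(\phi^{-1}(B))$ by the definition of factor. The lattice order is inclusion; meets and joins are the natural ones within each family (Claim~\ref{claim:cyclic-lattice} for cyclic flats; $\phi^{-1}(B_1\cap B_2)$ and $\phi^{-1}(B_1\cup B_2)$ for preimages), and for a mixed incomparable pair $(C,\phi^{-1}(B))$ set $C\lor\phi^{-1}(B) := \phi^{-1}(\phi(C) \cup B)$ and let $C\land\phi^{-1}(B)$ be the largest element of $\LL$ contained in $C\cap\phi^{-1}(B)$. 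Define a measure on $N$ by $\mu(a) := f(a)$ for $a \in M$ and $\mu(a) := g'(\{a\})$ for $a \in N \setminus M$, and put $g := r*\mu$.

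Once the hypotheses of Lemma~\ref{lemma:keep-lattice-rank} are checked, $g$ is a polymatroid on $N$ and $g(Z) = r(Z)$ for every $Z \in \LL$; in particular, $g(\phi^{-1}(B)) = g'(B)$ for all $B \subseteq N'$, which is precisely the statement that $(g,N)/{\cong} = (g',N')$. To see $g\restr 2^M = f$, apply Lemma~\ref{lemma:conv-extend-poly}: condition (\ref{cond:1}) is the observation $\phi^{-1}(B)\cap M = \phi^{-1}(B\cap M')$, conditions (\ref{cond:2}) and (\ref{cond:5}) are built into the construction, and (\ref{cond:3})--(\ref{cond:4}) are monotonicity of $f$ and $g'$ plus the factor identity $g'(B) = f(\phi^{-1}(B))$ for $B \subseteq M'$.

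The main obstacle is verifying the submodularity-type inequality (\ref{eq:convolution.1}) for the mixed incomparable pair $(C,\phi^{-1}(B))$. The two pure cases are routine: for two preimages the extra $\mu$-term vanishes and the inequality reduces to submodularity of $g'$; for two cyclic flats it reduces to submodularity of $f$ combined with Lemma~\ref{lemma:basic-cyclic-flats}, which converts the gap between $f$ at $C_1 \cap C_2$ and at the maximal cyclic flat inside into the missing $\mu$-term. In the mixed case, writing $\tilde C := \phi^{-1}(\phi(C))$ and $B_0 := B \cap M'$, one first uses submodularity of $g'$ on $(B,\phi(C))$ to rewrite $g'(\phi(C) \cup B) - g'(B)$ as an upper bound for $g'(\phi(C)) - g'(\phi(C) \cap B)$, and then identifies this via the factor identity with $f(\tilde C) - f(\tilde C \cap \phi^{-1}(B_0))$. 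The remaining $f$-only inequality yields to submodularity of $f$ applied to $C$ and $\tilde C \cap \phi^{-1}(B_0)$ (using $C \subseteq \tilde C$), together with Lemma~\ref{lemma:basic-cyclic-flats}, which absorbs the measure term $\mu(C\cap\phi^{-1}(B) - K)$ into the gap $f(C) - f(K)$ for the chosen meet $K$. The upper bound clause of (\ref{eq:convolution.2}) is analogous, ultimately resting on the estimate $f(\phi^{-1}(e)) \le \mu_f(\phi^{-1}(e))$ for each class $e \in M'$.
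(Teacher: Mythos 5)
Your construction differs from the paper's in one essential respect: the paper takes $\LL$ to consist of \emph{all} subsets of $M$ together with all preimages $\phi^{-1}(B)$, whereas you take only the \emph{cyclic flats} of $(f,M)$ together with the preimages. This is fatal, because your $\LL$ is not in general a lattice under inclusion, so Theorem~\ref{thm:convolution} and Lemma~\ref{lemma:keep-lattice-rank} do not apply. Concretely, let $(f,M)$ be the direct sum of $U_{2,3}$ on $\{1,2,3\}$, $U_{3,4}$ on $\{4,5,6,7\}$, and a coloop $8$, and let $\cong$ have the single nontrivial class $\{7,8\}$. The cyclic flats are $\emptyset$, $\{1,2,3\}$, $\{4,5,6,7\}$, $\{1,\dots,7\}$ (note $M$ itself is not cyclic, since $8$ is a coloop). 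Take $C=\{4,5,6,7\}\in\LL$ and $P=\phi^{-1}(\{\phi(1)\})=\{1\}\in\LL$. Any element of $\LL$ containing $C\cup P=\{1,4,5,6,7\}$ is either the cyclic flat $\{1,\dots,7\}$ or a preimage, and the smallest preimage containing $\{1,4,5,6,7\}$ is $\{1,4,5,6,7,8\}$. These two minimal upper bounds are incomparable, so $C\lor P$ does not exist; your proposed formula $C\lor\phi^{-1}(B):=\phi^{-1}(\phi(C)\cup B)$ gives $\{1,4,5,6,7,8\}$, which is not an upper bound of the other minimal upper bound $\{1,\dots,7\}$, hence not a least upper bound.

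The paper's larger lattice avoids this entirely, and also makes the verification of~(\ref{eq:convolution.1}) much cleaner: because every subset of $M$ lies in $\LL$, one always has $Z_1\land Z_2=Z_1\cap Z_2$, so the extra term $\mu(Z_1\cap Z_2 - Z_1\land Z_2)$ in~(\ref{eq:convolution.1}) vanishes and the inequality reduces to a direct consequence of submodularity of $f$ and $g'$ via the key intermediate set $Z_1'=\phi^{-1}(\phi(Z_1))$. Your mixed-case argument is correspondingly more intricate and, as sketched, does not close: when you apply submodularity of $f$ to $C$ and $\tilde C\cap\phi^{-1}(B_0)$, the union $C\cup(\tilde C\cap\phi^{-1}(B_0))$ need not be all of $\tilde C$, so the claimed bound on $f(\tilde C)-f(\tilde C\cap\phi^{-1}(B_0))$ does not follow. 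Even setting the lattice issue aside, this case would need a genuine additional idea. I would recommend enlarging $\LL$ to include all subsets of $M$ as the paper does; conditions~(\ref{cond:1})--(\ref{cond:5}) of Lemma~\ref{lemma:conv-extend-poly} remain trivially satisfied, and the submodularity checks become the short chain of inequalities in the paper's proof.
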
 
\begin{proof}
Let $\phi:M\to M'$ map elements of $M$ to their equivalence classes. Let
$N=M\cup(N'{-}M')$, and extend $\phi$ to $N$ by stipulating that it is the 
identity on $N'{-}M'$. The
lattice $\LL$ consists of subsets $Z\subseteq N$ for which
\begin{itemize}
\item[a)] either $Z\subseteq M$;
\item[b)] or $Z=\phi^{-1}(\phi(Z))$, i.e., $Z$ contains complete equivalence
classes only.
\end{itemize}
$\LL$ is a lattice, and $Z_1\land Z_2=Z_1\cap Z_2$. If
both $Z_1$ and $Z_2$ satisfy a) or none of them satisfies a), then $Z_1\lor
Z_2=Z_1\cup Z_2$, otherwise $Z_1\lor Z_2=\phi^{-1}(\phi(Z_1\cup Z_2))$.
Define the rank $r(Z)$ as
$$
   r(Z) = \begin{cases} f(Z) & \mbox{if $Z\subseteq M$}, \\
                        g'(\phi(Z)) & \mbox{if $Z=\phi^{-1}(\phi(Z))$}.
          \end{cases}
$$
If $Z$ satisfies both conditions, then the two lines give the same value
$f(Z)=f'(\phi(Z))=g'(\phi(Z))$ as $g'$ is an extension of $f'$. Define
$\mu(a)=f(a)$ if $a\in M$, and $\mu(a)=g'(a)$ if $a\in N{-}M=N'{-}M'$.
Let $(g,N)$ be the convolution of $(r,\LL)$ and $\mu$. We claim that both
conditions of Lemma \ref{lemma:keep-lattice-rank} hold. If both $Z_1$ and
$Z_2\in\LL$ satisfy a) then they follow from the fact that $(f,M)$ is a
polymatroid and $\mu(a)=f(a)$. If both $Z_1$ and $Z_2$ satisfies b), then 
one uses the fact
that $(g',N')$ is a polymatroid. Finally, let $Z_1\subseteq M$, and
$Z_2=\phi^{-1}(\phi(Z_2))$, and set $Z_1'=\phi^{-1}(\phi(Z_1))\subseteq M$.
Then $Z_1\lor Z_2=Z_1'\cup Z_2$, and the submodularity of $g'$ gives
$$
  r(Z_1'\cup Z_2) - r(Z_2) \le r(Z_1') - r(Z_1'\cap Z_2),
$$
while submodularity of $f$ yields
$$
   r(Z_1')-r(Z_1'\cap Z_2) \le r(Z_1)-r(Z_1\cap Z_2).
$$
Together they give (\ref{eq:convolution.1}) for this case as well. Similar 
calculation shows that (\ref{eq:convolution.2}) also holds. Thus for all 
$Z\in\LL$ we have $g(Z)=r(Z)$, which means that a) $g$ is an extension
of $f$, and b) $g/{\cong}$ is the same as $g'$, as was claimed.
\end{proof}

\begin{corollary}\label{corr:factors-are-sticky}
Factors of a sticky polymatroid are sticky.
\end{corollary}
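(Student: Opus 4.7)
The plan is to lift the two extensions of the factor up to extensions of the sticky polymatroid $(f,M)$ via Theorem \ref{thm:factor-extension}, use stickiness there to produce a joint extension, and then push that joint extension back down by factoring.

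Concretely, write $(f',M')=(f,M)/{\cong}$ and suppose $(g_1',N_1')$ and $(g_2',N_2')$ are two extensions of $(f',M')$ sharing the common part $(f',M')$, so $N_1'\cap N_2'=M'$. Applying Theorem \ref{thm:factor-extension} twice yields, for each $i=1,2$, an extension $(g_i,N_i)$ of $(f,M)$ whose factor by the natural extension of $\cong$ (elements outside $M$ fixed as singletons) coincides with $(g_i',N_i')$. After renaming the new elements we may arrange $N_1\cap N_2=M$. Since $(f,M)$ is sticky, pick an amalgam $(h,N_1\cup N_2)$ of $(g_1,N_1)$ and $(g_2,N_2)$. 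Extend $\cong$ to all of $N_1\cup N_2$ in the same way, let $\phi$ be the quotient map, and set $(h^*,N^*)=(h,N_1\cup N_2)/{\cong}$; its ground set is $N^*=N_1'\cup N_2'$. I would then claim that $(h^*,N^*)$ is the desired amalgam of $(g_1',N_1')$ and $(g_2',N_2')$.

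To close the argument one verifies that the restriction of $h^*$ to subsets of $N_i'$ reproduces $g_i'$. For $A'\subseteq N_1'$ the preimage $\phi^{-1}(A')$ sits entirely inside $N_1$: an equivalence class meeting $M$ lies in $M\subseteq N_1$, while a new singleton with image in $A'-M'$ lies in $N_1'-M'\subseteq N_1-M$. Hence $h^*(A')=h(\phi^{-1}(A'))=g_1(\phi^{-1}(A'))=g_1'(A')$, using that $h$ extends $g_1$ and that $g_1'=g_1/{\cong}$; the symmetric computation covers $N_2'$. The only point that demands attention — and the closest thing to an obstacle — is ensuring that the extended equivalence relation on $N_1\cup N_2$ does not glue together elements coming from the two different extensions. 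This is automatic because the extension of $\cong$ outside $M$ is the identity, so after renaming new points apart the quotient behaves correctly. Beyond this bookkeeping there is no real difficulty, as Theorem \ref{thm:factor-extension} has already carried the analytic weight.
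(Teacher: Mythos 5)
Your proof is correct and follows exactly the route the paper takes: lift the two extensions of the factor to extensions of $(f,M)$ via Theorem \ref{thm:factor-extension}, invoke stickiness to get an amalgam upstairs, and pass to the factor. The paper's own proof is just a terser version of the same argument; you merely spell out the (routine) check that the factored amalgam restricts to $g_i'$ on each $N_i'$.
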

\begin{proof}
Let $f'$ be a factor of $f$, and $g_1', g_2'$ be two extensions of $f'$.
Then there are extensions $g_1, g_2$ of $f$ such that $g_i'$ is a factor of
$g_i$ using the same equivalence relation. If $f$ is sticky, then $g_1,g_2$ has an
amalgam $g$, and then the factor of $g$ is an amalgam of $g'_1$ and $g'_2$.
\end{proof}

Helgason's theorem \cite{helgason} is another consequence of Theorem
\ref{thm:factor-extension}. We state a more general statement, Helgason's
original construction is the special case when the matroids $(r_i,P_i)$ are
the free matroids on $f(i)$ elements.
\begin{corollary}\label{corr:poly-to-matroid}
Let $(f,M)$ be an integer polymatroid, and for each $i\in M$ let $(r_i,P_i)$ be 
rank $f(i)$ matroid with disjoint ground sets $P_i$. Let
$N=\bigcup\,\{P_i:i\in M\}$. There is matroid $(g,N)$ such that $(f,M)$ is
a factor of $(g,N)$; moreover $(g,N)$ restricted to $P_i$ is isomorphic to
$(r_i,P_i)$.
\end{corollary}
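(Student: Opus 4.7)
The plan is to construct $(g,N)$ iteratively by blowing up the elements of $M$ one at a time, replacing each $i$ with the matroid $(r_i,P_i)$, and invoking Theorem~\ref{thm:factor-extension} at each step.

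The key tool is a single blow-up lemma: for any polymatroid $(H,S)$, an element $i_0\in S$, and a matroid $(r,P)$ on a set $P$ disjoint from $S{-}\{i_0\}$ with $r(P)=H(\{i_0\})$, there is a polymatroid $(H',S')$ on $S'=(S{-}\{i_0\})\cup P$ extending $(r,P)$ whose factor, under the equivalence $\cong_0$ that collapses $P$ to $\{i_0\}$ and fixes all other elements, equals $(H,S)$. This is a direct application of Theorem~\ref{thm:factor-extension}: take its polymatroid to be $(r,P)$ and its equivalence to be the one lumping $P$ into a single class identified with $i_0$, so the theorem's factor is the one-point polymatroid of rank $r(P)=H(\{i_0\})$; by hypothesis $(H,S)$ extends this one-point factor, so the theorem yields the desired $(H',S')$.

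Next, enumerate $M=\{i_1,\dots,i_n\}$, set $(H_0,S_0)=(f,M)$, and at step $k$ apply blow-up to $(H_{k-1},S_{k-1})$ with $(r_{i_k},P_{i_k})$. The prerequisite $H_{k-1}(\{i_k\})=r_{i_k}(P_{i_k})=f(i_k)$ holds inductively, because Theorem~\ref{thm:factor-extension} guarantees each extension agrees with its source on the source's ground set, and the singleton $\{i_k\}$ sits in every earlier $S_j$ untouched. After $n$ steps $S_n=N$, and we set $g:=H_n$. Composing the factor identities $H_k/{\cong_k}=H_{k-1}$ gives $g/{\cong}=f$, where $\cong$ has classes exactly the $P_i$; the restriction $g\restr P_{i_k}=r_{i_k}$ holds because the step-$k$ blow-up makes $H_k$ extend $(r_{i_k},P_{i_k})$, and every later blow-up preserves rank on subsets of $P_{i_k}$ (such subsets lie in the source ground set at each later stage, and collapsing a disjoint class leaves them alone).

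Finally, $g$ is a matroid: every blow-up is built via the integer-valued convolution inside Theorem~\ref{thm:factor-extension}, so $g$ is integer-valued, and $g(\{x\})=r_i(\{x\})\in\{0,1\}$ for $x\in P_i$. The main obstacle is the iteration bookkeeping — propagating the matching singleton ranks, the $P_i$-restrictions, and the composed factor through $n$ steps — but each piece reduces cleanly to the clause in Theorem~\ref{thm:factor-extension} that the produced extension agrees with the starting polymatroid on the starting ground set.
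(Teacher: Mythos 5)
Your proof is correct and takes essentially the same route as the paper: iteratively replace each $i\in M$ by $P_i$, applying Theorem~\ref{thm:factor-extension} with the one-point polymatroid on $\{i\}$ as the factor of $(r_i,P_i)$ and the current polymatroid as its extension. The paper's proof is terser, leaving the iteration bookkeeping and the verification that the resulting $g$ is integer-valued with singleton ranks in $\{0,1\}$ implicit; you spell these out, but the underlying argument is identical.
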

\begin{proof}
Replace each $i\in M$ by $P_i$ one after the other. The one-point polymatroid $(f_i,\{i\})$
with the rank function $f_i(i)=r_i(P_i)=f(i)$ is a factor of $(r_i,P_i)$, and
$(f,M)$ is clearly an extension of this one-point polymatroid. By Theorem
\ref{thm:factor-extension} there is an extension of $(r_i,P_i)$ such that
$(f,M)$ is isomorphic to the factor when $P_i$ is replaced by a single
point.
\end{proof}

The next construction gives similar results for contracts of polymatroids.
\begin{theorem}\label{thm:contract-extension}
Let $(f,M)$ be a polymatroid, $X\subset M$, and $(f',M')$ be the contract
$(f,M)\contr X$. Let $(g',N')$ be an extension of $(f',M')$. There is
an extension $(g,N)$ of $(f,M)$ such that $(g',N')=(g,N)\contr X$.
\end{theorem}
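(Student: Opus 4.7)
The plan is to reduce the claim to Theorem~\ref{thm:factor-extension} by first collapsing $X$ to a single point. Let $\cong$ be the equivalence relation on $M$ whose only non-singleton class is $X$, and form the factor $(\tilde f,\tilde M) = (f,M)/{\cong}$, so that $\tilde M = (M{-}X)\cup\{x\}$ where $x$ represents the class $X$. Under the obvious identification of $\tilde M{-}\{x\}$ with $M' = M{-}X$, the one-point contract $(\tilde f,\tilde M)\contr\{x\}$ coincides with $(f',M')$, directly from the definitions of factor and contract.

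The main step is to lift the given extension $(g',N')$ to an extension $(\tilde g,\tilde N)$ of $(\tilde f,\tilde M)$. I would set $\tilde N = N'\cup\{x\}$ and define
$$
   \tilde g(A) = \begin{cases}
      g'(A) & \text{if } x\notin A,\\
      g'(A{-}x) + f(X) & \text{if } x\in A,
   \end{cases}
$$
that is, adjoin to $g'$ a modular element of rank $f(X)$ sitting on the new point $x$. A routine case analysis on whether $A$ and $B$ each contain $x$ verifies that $\tilde g$ is monotone and submodular and hence a polymatroid on $\tilde N$; because $g'$ extends $f'$ and $\tilde f(x) = f(X)$, it extends $\tilde f$; and its contract at $\{x\}$ recovers $(g',N')$ by construction.

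With $(\tilde g,\tilde N)$ in hand, apply Theorem~\ref{thm:factor-extension} to the polymatroid $(f,M)$, its factor $(\tilde f,\tilde M)$, and the extension $(\tilde g,\tilde N)$ of $(\tilde f,\tilde M)$. This yields an extension $(g,N)$ of $(f,M)$ with $N = M\cup(N'{-}M')$ whose factor by $\cong$ equals $(\tilde g,\tilde N)$; in particular $N{-}X = N'$.

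Finally I would check the contract identity. Since $(g,N)$ extends $(f,M)$, we have $g(X) = f(X)$; and for any $A\subseteq N{-}X$ the set $A\cup X$ is the $\cong$-preimage of $A\cup\{x\}$, hence
$$
   g(A\cup X) = \tilde g(A\cup\{x\}) = g'(A) + f(X),
$$
which upon subtracting $g(X)$ gives the required value $g'(A)$. The only real obstacle is the case analysis showing that $\tilde g$ is a polymatroid extending $\tilde f$; the rest is bookkeeping with the definitions of factor and contract together with a single invocation of Theorem~\ref{thm:factor-extension}.
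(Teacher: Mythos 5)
Your construction of $\tilde g$ does not extend $\tilde f$, and this breaks the invocation of Theorem~\ref{thm:factor-extension}. Concretely, for $A\subseteq M{-}X$ you set $\tilde g(A) = g'(A)$; since $g'$ extends $f'=(f,M)\contr X$, this equals $f'(A)=f(AX)-f(X)$. But the factor $\tilde f = (f,M)/{\cong}$ satisfies $\tilde f(A)=f(A)$ on such sets (the preimage of $A$ under $\phi$ is just $A$ itself). By submodularity $f(AX)-f(X)\le f(A)$, with strict inequality whenever $A$ and $X$ are dependent, so $\tilde g\restriction\tilde M$ is generically strictly below $\tilde f$. In short, the contract ``resets'' the rank of sets disjoint from $X$ relative to $X$, while the factor leaves it untouched; your formula conflates the two. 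The claim ``because $g'$ extends $f'$ and $\tilde f(x)=f(X)$, it extends $\tilde f$'' is therefore false, and Theorem~\ref{thm:factor-extension} cannot be applied.

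The deeper issue is that adjoining a modular point $x$ of rank $f(X)$ to $g'$ simply does not produce an extension of $\tilde f$: to build such an extension one needs to reconcile $f(A)$ on $A\subseteq M$ with $g'(A)+f(x)$ on sets containing $x$, and for sets $A\subseteq N'$ meeting both $M'$ and $N'{-}M'$ there is no canonical closed-form answer. This is precisely what the paper's lattice convolution resolves: it sets up the lattice of all $Z$ with either $Z\subseteq M$ or $x\in Z$, assigns $r(Z)=f(Z)$ on the first kind and $r(Z)=g'(Z{-}x)+f(x)$ on the second, and lets the minimization in the convolution produce consistent intermediate values, with Lemma~\ref{lemma:keep-lattice-rank} certifying that the lattice ranks (hence both $f$ and $g'$) are preserved. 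Your reduction to a singleton $X$ via a factor, and the final bookkeeping identifying $(g,N)\contr X$ with the contract of the factor, are both fine and match the paper's strategy; what is missing is the actual construction of the singleton-case extension, where some device like the convolution (or the paper's Lemma~\ref{lemma:keep-lattice-rank}) seems unavoidable.
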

\begin{proof}
As remarked at the end of Section \ref{subsec:extension}, it suffices to
consider the case when $X$ is a singleton, say $X=\{x\}$. Then
$M=M'\cup\{x\}$,
$N=N'\cup\{x\}$, and $f'(A)=f(Ax)-f(x)$ for all $A\subseteq M'$. The lattice $\LL$
consists of subsets $Z\subseteq N$ for which
\begin{itemize}
\item[a)] either $Z\subseteq M$, or
\item[b)] $x\in Z$.
\end{itemize}
Clearly, the join and meet is the union and intersection, respectively.
Define the rank $r(Z)$ as follows:
$$
   r(Z)= \begin{cases} f(Z) & \mbox{if $Z\subseteq M$}, \\
                       g'(Z{-}x)+f(x) & \mbox{if $x\in Z$}.
         \end{cases}
$$
As before, if $Z\in\LL$ satisfies both a) and b), then
$g'(Z{-}x)=f'(Z{-}x)=f(Z)-f(x)$, i.e., both lines give the same value.
Define $\mu(i)=f(i)$ if $i\in M$, and $\mu(i)=g'(i)$ when $i\in N'{-}M'$.
Let $(g,N)$ be the convolution of $(r,\LL)$ and $\mu$. Similarly to the
proof of Theorem \ref{thm:factor-extension}, if both $Z_1, Z_2\in\LL$
satisfy a), or both satisfy b) then the conditions of Lemma
\ref{lemma:keep-lattice-rank} hold. Suppose $Z_1\subseteq M$ and $x\in Z_2$.
Cases $Z_1<Z_2$ or $Z_2<Z_1$ are handled before. If $Z_1$ and $Z_2$ are
incomparable, then
$$
   r(Z_1\cup Z_2)-r(Z_2) \ge r(Z_1x)-r((Z_1\cap Z_2)x) \ge r(Z_1)-r(Z_1\cap
Z_2)
$$
applying previously settled cases. Thus for all $Z\in\LL$ we have
$r(Z)=g(Z)$ which proves the theorem.
\end{proof}

The same reasoning which proves Corollary \ref{corr:factors-are-sticky} gives the
following consequence.

\begin{corollary}\label{corr:contracts-are-sticky}
Contracts of a sticky polymatroid are sticky.\qed
\end{corollary}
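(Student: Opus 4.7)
The plan is to mimic the proof of Corollary \ref{corr:factors-are-sticky}, replacing the use of Theorem \ref{thm:factor-extension} with Theorem \ref{thm:contract-extension}. Let $(f,M)$ be a sticky polymatroid, $X\subset M$, and $(f',M')=(f,M)\contr X$. I want to show that if $(g_1',N_1')$ and $(g_2',N_2')$ are two extensions of $(f',M')$ with $N_1'\cap N_2'=M'$, then they admit an amalgam.

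First, I would apply Theorem \ref{thm:contract-extension} separately to each $g_i'$ to obtain extensions $(g_i,N_i)$ of $(f,M)$ with $N_i=N_i'\cup X$ and $(g_i,N_i)\contr X=(g_i',N_i')$. Since $N_1'\cap N_2'=M'=M{-}X$, adding $X$ back gives $N_1\cap N_2=M$, so $(g_1,N_1)$ and $(g_2,N_2)$ are two extensions of the same polymatroid $(f,M)$ with common part exactly $M$. Because $f$ is sticky, there is an amalgam $(g,N)$ with $N=N_1\cup N_2$, such that $g$ restricted to $N_i$ equals $g_i$.

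Next I would take $h=g\contr X$, a polymatroid on $N{-}X=N_1'\cup N_2'$, and check that $h$ restricted to $N_i'$ equals $g_i'$. For $A\subseteq N_i'$ we have $AX\subseteq N_i$, hence
\[
 h(A)=g(AX)-g(X)=g_i(AX)-f(X)=(g_i\contr X)(A)=g_i'(A),
\]
so $h$ is the desired amalgam of $g_1'$ and $g_2'$.

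There is essentially no hard step here: all the real work is packaged in Theorem \ref{thm:contract-extension}, and the only thing to verify is the routine fact that contraction along a set disjoint from the ``new'' points commutes with amalgamation, which follows directly from the definition of the contract. The one small thing to be careful about is the bookkeeping of ground sets, ensuring that $N_1\cap N_2=M$ so that the stickiness hypothesis for $f$ actually applies.
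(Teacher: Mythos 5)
Your proof is correct and is essentially the paper's own argument: the paper states only that ``the same reasoning which proves Corollary~\ref{corr:factors-are-sticky} gives the following consequence,'' and you have spelled out exactly that reasoning, invoking Theorem~\ref{thm:contract-extension} to lift the two extensions, applying stickiness of $(f,M)$ on the common part $M$, and then contracting the amalgam along $X$. The bookkeeping you carry out (that $N_1\cap N_2=M$ and that $h\restr N_i'=g_i'$) is precisely the routine verification the paper leaves implicit.
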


%%%%%%%%%%%%%%%%%%%%%%%%%%%%%%%%%%%%%%%%%%%%%%%%%%%%%%%%%%%%%%%%%%%%%%%%%%%%%%%%%

\section{An information-theoretical inequality}\label{sec:inequality}

Polymatroids are used extensively when attacking problems connected to
information theory or secret sharing, see
\cite{beimel-padro,farre-padro,fmadhe,M.entreg}. Typically these problems
are concerned with certain linear inequalities which hold in every
polymatroid, or in certain subclass of polymatroids. The inequality stated
in Lemma \ref{lemma:inequality} is behind all
existing proofs that a (poly)matroid is not sticky.

In this section the usual information-theoretical abbreviations will be
used. For arbitrary subsets $I,J,K$ of the ground set we write
\begin{align*}
    f(I,J|K) &= f(IK)+f(JK)-f(K)-f(IJK), \\[2pt]
    f(I,J)   &= f(I)+f(J)-f(IJ), \\[2pt]
    f(I|K)   &= f(IK)-f(K).
\end{align*}
If $I,J,K$ are disjoint, then $f(I,J|K)$ is just the modular defect of $IK$
and $JK$. However, no disjointness is assumed in these notations. If the 
function $f$ is clear from the context, it is also omitted.
The \emph{common information} and the \emph{Ingleton expression} are defined
as follows:
\begin{align*}
  \COMM(A,B;Y) &= (A,B|Y) + (Y|A)+ (Y|B)+(Y|AB),\\[2pt]
  \ING(A,B;P,Q) &= -(A,B)+(A,B|P)+(A,B|Q)+(P,Q).
\end{align*}
It is clear from the definition that $\COMM$ is always non-negative. If it
is zero, then $Y$ is determined by both $A$ and $B$ -- this is the usual way
to express the fact that $YA$ and $A$ as well as $YB$ and $B$ have the same rank --, moreover
$A$ and $B$ are independent given $Y$. The information-theoretic
interpretation is that $Y$ contains all information that $A$ and
$B$ both have but nothing more. If $\COMM(A,B;Y)=0$ then we say that $Y$
\emph{extracts the common information} of $A$ and $B$.

The Ingleton expression $\ING$ plays an important role in matroid
representation \cite{oxley} and in
polymatroid classification \cite{ingleton,M.entreg,matus-studeny}. An
\emph{Ingleton-violating polymatroid} is one where the Ingleton expression is
negative. 

The inequality stated in Lemma \ref{lemma:inequality} appeared in
\cite{one-adhesive} and goes back to \cite{fmadhe}. Essentially it says that
if a pair can be extended to an Ingleton-violating quadruple, then one
cannot extract their common information. 

\begin{lemma}\label{lemma:inequality}
The following inequality holds for arbitrary subsets
$A,B,P,Q,Y$:
\begin{equation}\label{eq:inequality}
\ING(A,B;P,Q)+\COMM(A,B;Y)\ge 0.
\end{equation}
\end{lemma}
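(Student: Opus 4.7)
The plan is a direct Shannon-type decomposition. The first step is purely algebraic: expanding $\ING$ and $\COMM$ by their definitions and collecting terms, the contributions $\pm f(AB)$, $\pm f(ABY)$, $\pm f(P)$, $\pm f(Q)$ all cancel, leaving
\[
\ING(A,B;P,Q) + \COMM(A,B;Y) = 2(Y|A) + 2(Y|B) + (A,B|P) + (A,B|Q) + (P,Q) - f(Y).
\]
So it suffices to show that this right-hand side is non-negative.

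For this, the key idea is to split each doubled term $2(Y|A)$ and $2(Y|B)$ into two copies, one to be chained against $P$ and the other against $Q$. Specifically, $(Y|A) - (Y|AP) = f(AY) + f(AP) - f(A) - f(APY) \ge 0$ is submodularity of $AY$ and $AP$ combined with monotonicity (since $AY \cap AP \supseteq A$); analogous identities hold for the pairs $(AY,AQ)$, $(BY,BP)$, $(BY,BQ)$. The four residual halves $(Y|AP), (Y|AQ), (Y|BP), (Y|BQ)$ then pair naturally with $(A,B|P)$ and $(A,B|Q)$ to yield $(A,B|PY)$ and $(A,B|QY)$, while the negative $-f(Y)$ gets absorbed into $(P,Q)$ to form $(P,Q|Y)$. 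A bit of bookkeeping delivers the identity
\begin{align*}
\ING+\COMM = {}& \bigl[(Y|A)-(Y|AP)\bigr] + \bigl[(Y|A)-(Y|AQ)\bigr] + \bigl[(Y|B)-(Y|BP)\bigr] + \bigl[(Y|B)-(Y|BQ)\bigr]\\
&{}+ (A,B|PY) + (A,B|QY) + (P,Q|Y) + (Y|ABP) + (Y|ABQ) + (Y|PQ),
\end{align*}
and each of the ten summands is manifestly non-negative: the first four by submodularity, the next three are conditional modular defects (already observed to be non-negative in the discussion of $\COMM$), and the last three by monotonicity of $f$.

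The main obstacle is discovering this decomposition; the rest is routine. Once one notices that $2(Y|A) = \bigl[(Y|A){-}(Y|AP)\bigr] + \bigl[(Y|A){-}(Y|AQ)\bigr] + (Y|AP) + (Y|AQ)$ (and similarly for $B$), and that the leftover $-f(Y)$ has to be paired with $(P,Q)$ to produce the useful $(P,Q|Y)$, the remaining terms are forced. Verification of the identity then reduces to matching the coefficients of the eighteen atomic rank values $f(A),\dots,f(PQY)$ that appear, which is mechanical.
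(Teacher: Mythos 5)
Your proof is correct and is essentially the paper's proof. Once you rewrite $(Y|A)-(Y|AP)=(P,Y|A)$, $(Y|A)-(Y|AQ)=(Q,Y|A)$, $(Y|B)-(Y|BP)=(P,Y|B)$ and $(Y|B)-(Y|BQ)=(Q,Y|B)$, your ten-term decomposition is term-for-term the paper's decomposition with $E=\emptyset$ (the paper proves the conditional version and lets $E=\emptyset$); your intermediate algebraic reduction to $2(Y|A)+2(Y|B)+(A,B|P)+(A,B|Q)+(P,Q)-f(Y)$ is simply motivational scaffolding for the same identity.
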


In Section \ref{sec:main-theorem}
we need the conditional version which uses the conditional
$\COMM$ and $\ING$ expressions defined as
\begin{align*}
   \COMM(A,B;Y\,|\,E) &= (A,B|YE)+ (Y|AE)+ (Y|BE)+(Y|ABE), \\[2pt]
   \ING(A,B;P,Q\,|\,E) &= -(A,B|E)+ (A,B|PE)+ (A,B|QE)+(P,Q|E).
\end{align*}

\begin{lemma}\label{lemma:cond-inequality}
The following inequality holds for arbitrary subsets $A,B,P,Q,Y$ and $E$:
\begin{equation}\label{eq:cond-inequality}
\ING(A,B;P,Q\,|\,E)+\COMM(A,B;Y\,|\,E)\ge 0.
\end{equation}
\end{lemma}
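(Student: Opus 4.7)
The plan is to reduce the conditional inequality to the unconditional Lemma \ref{lemma:inequality} by passing to a contract. Set $E$ as a subset of the ground set $M$, and consider the polymatroid $(f',M{-}E) = (f,M)\contr E$, whose rank function is $f'(X) = f(XE) - f(E)$ for $X \subseteq M{-}E$. The key identity to verify first is that for arbitrary $X,Y,Z\subseteq M{-}E$ one has $f'(X|Y) = f(X|YE)$ and $f'(X,Y|Z) = f(X,Y|ZE)$, both of which follow by direct expansion: the constants $f(E)$ cancel, and e.g.\ $f'(X,Y|Z) = f(XZE)+f(YZE)-f(ZE)-f(XYZE)$ on both sides.

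Next, given the arbitrary subsets $A,B,P,Q,Y,E$ of $M$ appearing in the statement, I would project each to $M{-}E$ by setting $A' = A{-}E$, $B' = B{-}E$, $P' = P{-}E$, $Q' = Q{-}E$, $Y' = Y{-}E$. The absorption identity $A'E = AE$ (and likewise for the other four sets) shows that every term of the form $f(\dots E)$ appearing in $\COMM(A,B;Y\,|\,E)$ or $\ING(A,B;P,Q\,|\,E)$ is unchanged if we replace each capital letter by its primed version. Combined with the first step, this gives
\begin{align*}
\COMM(A,B;Y\,|\,E) &= \COMM(A',B';Y') \text{ computed in } f', \\
\ING(A,B;P,Q\,|\,E) &= \ING(A',B';P',Q') \text{ computed in } f'.
\end{align*}

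Finally, applying the unconditional Lemma \ref{lemma:inequality} to the polymatroid $(f',M{-}E)$ with the subsets $A',B',P',Q',Y'$ yields $\ING(A',B';P',Q') + \COMM(A',B';Y') \ge 0$, which is precisely \eqref{eq:cond-inequality}.

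There is no real obstacle here; the only delicate point is ensuring the bookkeeping of elements that may lie in $E$ is handled correctly, which is taken care of by the absorption identity $A'E = AE$ that holds regardless of whether $A$ meets $E$. The conceptual content is simply that the contract operation commutes with taking conditional expressions relative to the contracted set, so conditioning on $E$ in the base polymatroid corresponds to working unconditionally in the contract along $E$.
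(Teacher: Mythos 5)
Your reduction is correct, and all of the bookkeeping checks out: the identities $f'(X|Y) = f(X|YE)$, $f'(X,Y|Z) = f(X,Y|ZE)$ for the contract, and the absorption $A'E = AE$, together do give $\ING(A,B;P,Q\,|\,E) = \ING(A',B';P',Q')$ and $\COMM(A,B;Y\,|\,E) = \COMM(A',B';Y')$ evaluated in the contract $(f,M)\contr E$, so Lemma \ref{lemma:inequality} applied to the contract yields the claim. However, this is a genuinely different route from the paper's. The paper states your equivalence explicitly in the remark preceding the proof (``applying (\ref{eq:inequality}) in the contracted polymatroid $(f,M)\contr E$ gives the conditional version'') but then deliberately proves the \emph{conditional} version directly, by rewriting $\ING(A,B;P,Q\,|\,E)+\COMM(A,B;Y\,|\,E)$ as a sum of ten manifestly non-negative conditional terms such as $(A,B|PYE)$, $(P,Y|AE)$, $(Y|PQE)$, and so on. The reason to prefer the paper's direction is that Lemma \ref{lemma:inequality} is only cited from the literature, not proved in the paper; proving the conditional version directly and then specializing $E=\emptyset$ establishes both lemmas at once and keeps the paper self-contained. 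Your approach is logically valid given Lemma \ref{lemma:inequality} as a black box, but if you were asked to make the argument self-contained you would still owe a proof of the unconditional inequality, and the most natural such proof is precisely the ten-term decomposition that the paper runs in the conditional setting.
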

Before giving a proof let us remark that the conditional and unconditional
versions are equivalent. Setting $E=\emptyset$ in (\ref{eq:cond-inequality})
gives (\ref{eq:inequality}), while applying (\ref{eq:inequality}) in the
contracted polymatroid $(f,M)\contr E$ gives the conditional version
(\ref{eq:cond-inequality}). Thus it suffices to prove one of them, and we
choose the conditional version.

\begin{proof}
It is enough to show that for all subsets $A,B,P,Q,Y, E$ the expression
\begin{align*}
   & -(A,B|E)+(A,B|PE)+(A,B|QE)+(P,Q|E) + {} \\
   &~~ {}+ (A,B|YE)+(Y|AE)+(Y|BE)+(Y|ABE)
\end{align*}
is always non-negative.
The first line is the conditional $\ING$ and the second line is the
conditional $\COMM$. Expressing everything as 
linear combinations of subset ranks, this expression turns out to be the same as the 
following ten-term sum:
\begin{align*}
   & (A,B|PYE) + (A,B|QYE) + (P,Q|YE) + {} \\
   & ~~ {} + (P,Y|AE)+(P,Y|BE)+(Q,Y|AE)+(Q,Y|BE) + {} \\
   & ~~ {} + (Y | ABPE) + (Y | ABQE) + (Y | PQE).
\end{align*}
As each term in this latter sum is non-negative, inequality
(\ref{eq:cond-inequality}) holds.
\end{proof}

It is worth to note that if a (poly)matroid is representable over some
field, then the common information can always be extracted by adjoining the
intersection of the subspaces representing $A$ and $B$. Consequently in
representable (poly)matroids the Ingleton expression is always
non-negative, which was the original motivation for creating it
\cite{ingleton}.

%%%%%%%%%%%%%%%%%%%%%%%%%%%%%%%%%%%%%%%%%%%%%%%%%%%%%%%%%%%%%%%%%%%%%%%%%%%%%%%%%
\section{Proof of the main theorem}\label{sec:main-theorem}

This section proves the main theorem of this paper: \emph{A polymatroid with
a non-principal modular cut is not sticky}. Suppose $(f,M)$ is such a
polymatroid. Using the convolution technique we construct two extensions,
and then using a result from Section \ref{sec:inequality} we show that these
extensions have no amalgam. First we make some preparations. As explained in
Section \ref{subsec:polymatroid}, fix the non-modular pair of flats $F_1,F_2$ 
such that the modular cut $\M$ is generated by them, their
intersection $S=F_1\cap F_2$ is not in $\M$, and
$\delta = \delta(\M)=\delta(F_1,F_2)>0$.

\medskip
The first extension extracts the common information of $F_1$ and
$F_2$ using a one-element extension.
Consider the lattice $\LL$ on subsets of $N=M\cup\{a\}$ where $a\notin M$, consisting of
\begin{itemize}
\item[a)] all flats of $M$,
\item[b)] the sets $aF$ for $F\in\M$.
\end{itemize}
Lattice operations are derived from the usual subset ordering. It is easy to
check that any two elements of $\LL$ have a greatest lower bound and a least
upper bound.
Fix a non-negative $\eps\ge 0$. The rank function on $\LL$ is defined as
$$\begin{array}{l@{\,=\,}ll}
   r(F) & f(F)       & \mbox{ if $F$ is a flat in $M$},\\[2pt]
   r(aF) & f(F)+\eps & \mbox{ if $F\in\M$}.
\end{array}$$
Define the measure by $\mu(i)=f(i)$ for $i\in M$ and $\mu(a)=\eps+\delta$.
\begin{claim}\label{claim:1.1}
Conditions {\upshape(\ref{eq:convolution.1})} and
{\upshape(\ref{eq:convolution.2})} hold.
\end{claim}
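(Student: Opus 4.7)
The plan is to split into four cases according to the types of $Z_1$ and $Z_2$: each is either a flat of $M$ (type (a)) or of the form $aF$ with $F\in\M$ (type (b)). A preliminary step is to pin down the lattice operations. Since type (a) elements never contain $a$ while type (b) ones always do, no type (b) element lies below a type (a) one. For two flats $F,F'$ of $M$ one has $F\land F'=F\cap F'$ and $F\lor F'=\cl(F\cup F')$. For an incomparable pair $Z_1=F$ (flat) and $Z_2=aF''$ with $F''\in\M$ one has $Z_1\land Z_2=F\cap F''$ (the largest flat below both) and $Z_1\lor Z_2=a\cdot\cl(F\cup F'')$; note $\cl(F\cup F'')\in\M$ because $\M$ is upward closed and $F''\in\M$. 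For two type (b) elements $aF_1',aF_2'$ the join is $a\cdot\cl(F_1'\cup F_2')$, while the meet is $a(F_1'\cap F_2')$ when $F_1'\cap F_2'\in\M$ and is the flat $F_1'\cap F_2'$ itself otherwise.

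Condition (\ref{eq:convolution.2}) is routine. For a comparable pair $Z_1<Z_2$ the difference $r(Z_2)-r(Z_1)$ is either $f(F_2)-f(F_1)$ (both of the same type, so the two copies of $\eps$ cancel) or $f(F)+\eps-f(Z_1)$ (type (a) below type (b)). Non-negativity is monotonicity of $f$ together with $\eps\ge 0$. The upper bound $\le\mu(Z_2{-}Z_1)$ follows from the standard $f(X)\le\mu_f(X)$; in the mixed case the extra $\eps$ appearing in $r$ is absorbed by the $\eps$ summand of $\mu(a)=\eps+\delta$.

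For (\ref{eq:convolution.1}) with $Z_1\incmp Z_2$ the only substantive case is that in which both are type (b) and $F_1'\cap F_2'\notin\M$. In every other incomparable case I would observe that $Z_1\cap Z_2=Z_1\land Z_2$, so the measure term vanishes, and the $\eps$ contributions match on the two sides, leaving the inequality to reduce verbatim to submodularity of $f$. In the remaining case one has $Z_1\cap Z_2-Z_1\land Z_2=\{a\}$, contributing $\mu(a)=\eps+\delta$ on the right; one $\eps$ survives on each side and cancels. Cleaning up, (\ref{eq:convolution.1}) reduces to $\delta(F_1',F_2')\ge\delta$, which holds by the very definition of $\delta=\delta(\M)$ as the minimum modular defect over pairs in $\M$ whose intersection is not in $\M$.

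The only place more than submodularity and upward closure of $\M$ enters is this last subcase; it is the natural obstacle, but it dissolves as soon as one notices that the choice $\mu(a)=\eps+\delta$ is calibrated precisely to absorb the worst-case modular defect in $\M$.
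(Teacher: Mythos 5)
Your proof is correct and follows essentially the same route as the paper: both isolate the two type-(b) elements $aF_1',aF_2'$ with $F_1'\cap F_2'\notin\M$ as the only case where the measure term in (\ref{eq:convolution.1}) is nonzero, and both close that case by observing that $\delta(F_1',F_2')\ge\delta(\M)$ by the definition of $\delta$. The paper merely states these facts more tersely, leaving the routine case-by-case verification implicit, whereas you spell out the lattice operations and the cancellation of the $\eps$ terms explicitly.
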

\begin{proof}
This is clear when $Z_1$ and $Z_2$ are flats of $M$, so we may assume that
at least one of them contains the extra element $a$.
(\ref{eq:convolution.2}) holds as $\eps\le\mu(a)$, and
(\ref{eq:convolution.1}) follows from the fact that $Z_1\land Z_2=Z_1\cap
Z_2$ except for $aF_1\land aF_2 = F_1\cap F_2$ when $F_1,F_2\in\M$ but
$F_1\cap F_2\notin\M$. In that case the additional term is $\mu(a)$, and
(\ref{eq:convolution.1}) holds as
$$
   r(aF_1)+r(aF_2)-r(aF_1\lor aF_2)-r(F_1\cap F_2) = \eps+\delta(F_1,F_2)\ge
\mu(a),
$$
as required.
\end{proof}
\begin{claim}\label{claim:1.2}
With $N=\{a\}\cup M$ conditions {\upshape(\ref{cond:1}) -- (\ref{cond:5})} hold.
\end{claim}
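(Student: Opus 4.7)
The plan is to verify conditions (\ref{cond:1})--(\ref{cond:5}) in turn, exploiting the two-case description of $\LL$: each of its elements is either a flat of $M$ (case a) or of the form $aF$ with $F\in\M$ (case b). The computation for each condition is short, so the work is essentially a bookkeeping check that the lattice, the rank function, and the measure mesh correctly with the underlying polymatroid.

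Four of the five conditions should follow directly from definitions. For (\ref{cond:1}), a flat $Z\subseteq M$ gives $Z\cap M=Z\in\LL$, while a set of the form $aF$ meets $M$ in $F$, which is a flat (because $\M$ is a modular cut and hence consists of flats), so $F\in\LL$ by case (a). Condition (\ref{cond:2}) holds because every cyclic flat of $M$ is in particular a flat, hence in $\LL$ by case (a). Condition (\ref{cond:4}) holds because any $Z\in\LL$ with $Z\subseteq M$ cannot be of the form $aF$ (the new element $a$ is not in $M$), so it must fall under case (a), and then the defining formula yields $r(Z)=f(Z)$. Condition (\ref{cond:5}) is simply the definition of $\mu$ on elements of $M$.

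The only condition requiring even a small case split is monotonicity (\ref{cond:3}). Given $Z_1<Z_2$ in $\LL$, I would split on where the new element $a$ sits. If $a$ belongs to neither, both $Z_1$ and $Z_2$ are flats of $M$ and monotonicity of $f$ closes the case. If $a$ belongs to both, then $Z_1=aF_1'$ and $Z_2=aF_2'$ with $F_1'\subsetneq F_2'$ in $\M$; the ranks differ from $f(F_1')$ and $f(F_2')$ by the same additive constant $\eps$, so monotonicity of $f$ suffices again. The remaining possibility is $a\in Z_2\setminus Z_1$, where $Z_1$ is a flat of $M$ strictly contained in $Z_2=aF$; then $Z_1\subseteq F$ and $r(Z_1)=f(Z_1)\le f(F)\le f(F)+\eps=r(Z_2)$, where $\eps\ge 0$ is used only here. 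No genuine obstacle arises: everything reduces to monotonicity of $f$ plus $\eps\ge 0$, since the subtler interaction with $\M$ involving $\mu(a)=\eps+\delta$ was already absorbed by Claim \ref{claim:1.1}.
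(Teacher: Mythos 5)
Your proof is correct and is precisely the case-by-case verification that the paper dismisses with "An easy case by case checking." You have filled in the bookkeeping the paper omits, including the one place where something nontrivial is used ($\eps\ge 0$ in condition (\ref{cond:3}) when $a\in Z_2\setminus Z_1$), and correctly noted that the interaction with $\mu(a)=\eps+\delta$ belongs to Claim~\ref{claim:1.1}, not here.
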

\begin{proof}
An easy case by case checking.
\end{proof}

Let $(r',aM)$ be the convolution of $(r,\LL)$ and $\mu$. 
\begin{claim}\label{claim:1.3}
{\upshape a)} $(r',aM)$ is a one-point extension of $(f,M)$.
{\upshape b)} $r'(aF)=f(F)+\eps$ for all $F\in\M$.
{\upshape c)} $r'(aS)=f(S)+\mu(a)$ for the intersection $S=F_1\cap F_2$.
\end{claim}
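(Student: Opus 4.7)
The plan is to address the three parts in order, using Claims 1.1 and 1.2, which together supply the hypotheses of Theorem \ref{thm:convolution}, Lemma \ref{lemma:keep-lattice-rank}, and Lemma \ref{lemma:conv-extend-poly}.

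Part (a) follows from Lemma \ref{lemma:conv-extend-poly}: its hypotheses (\ref{cond:1})--(\ref{cond:5}) are exactly what Claim \ref{claim:1.2} provides, so $r'(A)=f(A)$ for every $A\subseteq M$, which is precisely what it means for $(r',aM)$ to be an extension of $(f,M)$. Polymatroidality of $r'$ on the larger ground set comes from Theorem \ref{thm:convolution}, whose hypothesis (\ref{eq:convolution.1}) is part of Claim \ref{claim:1.1}.

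Part (b) is a one-line application of Lemma \ref{lemma:keep-lattice-rank}. For each $F\in\M$, the set $aF$ is in $\LL$ by construction, and the hypotheses (\ref{eq:convolution.1})--(\ref{eq:convolution.2}) of that lemma are exactly Claim \ref{claim:1.1}. Hence $r'(aF)=r(aF)=f(F)+\eps$.

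Part (c) is the substantive piece, because $aS\notin\LL$ (as $S=F_1\cap F_2\notin\M$), so the minimum in (\ref{eq:convolution}) must be evaluated by hand. The upper bound $r'(aS)\le f(S)+\mu(a)$ follows by picking $Z=S$, which lies in $\LL$ since $S$ is a flat. For the matching lower bound one verifies $r(Z)+\mu(aS-Z)\ge f(S)+\mu(a)$ for every $Z\in\LL$, splitting on whether $Z\subseteq M$ or $Z=aG$ for some $G\in\M$. When $Z\subseteq M$, the inequality $f(Z)+\mu(S-Z)\ge f(S)$ is a standard consequence of submodularity combined with $\mu(i)=f(i)$, and adding $\mu(a)$ concludes the case. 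When $Z=aG$ with $G\in\M$, Lemma \ref{lemma:mod-cut-estimated}(a) forces $S\subseteq G$, hence $aS-aG=\emptyset$ and the expression simplifies to $f(G)+\eps$; Lemma \ref{lemma:mod-cut-estimated}(b), $f(G)-f(S)>\delta$, then closes the gap. This last sub-case is the main obstacle: it is precisely where the nontrivial rank-gap bound on every element of the modular cut is required to prevent the lattice element $aG$ from undercutting $Z=S$, and it explains why the particular choice $\mu(a)=\eps+\delta$ is sharp.
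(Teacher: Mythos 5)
Your proof is correct and follows essentially the same route as the paper: part (a) via Lemma \ref{lemma:conv-extend-poly}, part (b) via Lemma \ref{lemma:keep-lattice-rank}, and part (c) by computing the minimum in the convolution directly, splitting on whether $Z\subseteq M$ or $Z=aG$ with $G\in\M$ and using both parts of Lemma \ref{lemma:mod-cut-estimated} in the second case. The added remark that polymatroidality of $r'$ on the enlarged ground set comes from Theorem \ref{thm:convolution} is a harmless clarification of a step the paper leaves implicit.
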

\begin{proof}
a) is a consequence of Lemma \ref{lemma:conv-extend-poly} and Claim
\ref{claim:1.2}. b) follows from Lemma \ref{lemma:keep-lattice-rank}
and Claim \ref{claim:1.1}, as $r'(aF)=r(aF)=f(F)+\eps$.
For c) the definition of the convolution gives
$$
   r'(aS) = \min\,\{ r(Z)+\mu(aS{-}Z) \,:\, Z\in\LL \}.
$$
If $Z\in\LL$ is a flat of $M$, then $f(S)\le f(Z)+\mu_f(S{-}Z)$ by
submodularity, $\mu_f(S{-}Z)=\mu(S{-}Z)$, thus
$$
   r(Z)+\mu(aS{-}Z) \ge f(S)+\mu(a)
$$
with equality when the flat $Z$ is just $S$. If $F\in\M$, then $S\subseteq F$
and $f(F)-f(S)>\delta$ by Lemma \ref{lemma:mod-cut-estimated}, thus
$$
  r(aF)+\mu(aS{-}aF) = f(F)+\eps > f(S)+\delta+\eps=f(S)+\mu(a).
$$
In summary, $r'(aS)=f(S)+\mu(a)$, as was claimed.
\end{proof}

\begin{lemma}\label{lemma:extract-common-info}
There is a one-point extension of $(f,M)$ which extracts the conditional common
information of $F_1$ and $F_2$, that is, $\COMM(F_1,F_2,a\,|\,S)=0$.
\end{lemma}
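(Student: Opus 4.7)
The plan is to apply the convolution construction from Claims \ref{claim:1.1}--\ref{claim:1.3} with $\eps=0$; this yields a one-point extension $(r',aM)$ of $(f,M)$ with $\mu(a)=\delta$. The four summands making up $\COMM(F_1,F_2;a\,|\,S)$ are each non-negative, so it is enough to verify that all four vanish when computed from $r'$.

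Because $S\subseteq F_1\cap F_2$, every conditioning on $S$ is absorbed into its companion. By Claim \ref{claim:1.3}(b) specialized to $\eps=0$, we have $r'(aF_i)=f(F_i)=r'(F_i)$, so the two ``extract'' terms $r'(aF_i)-r'(F_i)$ for $i=1,2$ are zero. The third extract term, $r'(aF_1F_2)-r'(F_1F_2)$, needs slightly more care because $F_1F_2$ itself need not belong to $\LL$; the remedy is to sandwich using $\cl(F_1\cup F_2)$, which lies in $\M$ by upward closure. Then Claim \ref{claim:1.3}(b) applied to $\cl(F_1\cup F_2)$, together with monotonicity of $r'$, pinches $r'(aF_1F_2)$ between $r'(F_1F_2)=f(F_1F_2)$ and $r'(a\cl(F_1\cup F_2))=f(F_1F_2)$, forcing equality.

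The remaining ingredient is the conditional modular defect $r'(aF_1)+r'(aF_2)-r'(aS)-r'(aF_1F_2)$, and this is where the precise value $\mu(a)=\delta$ is used. Substituting Claim \ref{claim:1.3}(b),(c) and the equality just established gives
\[
f(F_1)+f(F_2)-\bigl(f(S)+\delta\bigr)-f(F_1\cup F_2)=\delta(F_1,F_2)-\delta=0.
\]
The main obstacle is precisely this cancellation: $\mu(a)$ must be large enough for condition (\ref{eq:convolution.1}) in Claim \ref{claim:1.1} to hold and small enough that the conditional modular defect vanishes; taking $\eps=0$ lands on the unique admissible value.
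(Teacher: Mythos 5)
Your proof follows the paper's own route exactly: apply the first convolution construction with $\eps=0$, invoke Claim~\ref{claim:1.3}, and check each summand of the conditional $\COMM$ is zero. The one place you improve on the paper is the term $r'(aF_1F_2)-r'(F_1F_2)$: the paper asserts $r'(aF_1F_2)=f(F_1F_2)$ as an instance of Claim~\ref{claim:1.3}(b) ``for $F_1F_2$,'' silently treating $F_1\cup F_2$ as a member of $\M$ even though it need not be a flat, whereas your sandwich via $\cl(F_1\cup F_2)\in\M$ and monotonicity of $r'$, together with $f(\cl(F_1\cup F_2))=f(F_1\cup F_2)$, fills this small gap cleanly.
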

\begin{proof}
Apply the above procedure with $\eps=0$, i.e., $\mu(a)=\delta$ to get the
polymatroid $(r',aM)$. According to Claim \ref{claim:1.3} this is
an extension of $(f,M)$, $r'(aF)=f(F)$ for
all $F\in\M$, in particular this is true for $F_1$, $F_2$, and $F_1F_2$; and 
$r'(aS)=f(S)+\delta$. Consequently
\begin{align*}
   r'(F_1,F_2|aS) &= r'(aF_1)+r'(aF_2)-r'(aF_1F_2)-r'(aS) = {} \\
    & ~~~~~~~=\delta(F_1,F_2)-\delta = 0,\\[2pt]
   r'(a|F_iS) &= r'(aF_i) - r'(F_i) = 0, \\[2pt]
   r'(a|F_1F_2S) &= r'(aF_1F_2)-r'(F_1F_2) = 0.
\end{align*}
Therefore all terms in $\COMM(F_1,F_2,a\,|\,S)$ are zero, proving the claim.
\end{proof}

\medskip
The second extension is a two-point extension of $M$ in which the
conditional Ingleton expression $\ING(F_1,F_2,u,v\,|\,S)$ is negative. 
To this end define the lattice $\LL$ on subsets of $uvM$ to consist of
\begin{itemize}
\item[a)] all flats of $M$,
\item[b)] subsets $uF$ and $vF$ for $F\in\M$,
\item[c)] $uvM$.
\end{itemize}
Choose $\eps=f(M)-\min\{\, f(F):F\in\M\}>0$. The rank function on $\LL$ is
$$\begin{array}{l@{\,=\,}ll}
   r(F) & f(F)       & \mbox{ if $F$ is a flat in $M$},\\[2pt]
   r(uF)=r(vF) & f(F)+\eps & \mbox{ if $F\in\M$},\\[2pt]
   r(uvM)      & f(M)+\eps.
\end{array}$$
Define the measure by $\mu(i)=f(i)$ for $i\in M$, and
$\mu(u)=\mu(v)=\eps+\delta$.

\begin{claim}\label{claim:2.1}
Conditions {\upshape(\ref{eq:convolution.1})} and
{\upshape(\ref{eq:convolution.2})} hold.
\end{claim}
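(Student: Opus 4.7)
The plan is to verify both conditions by exhaustive case analysis on the pair $(Z_1, Z_2) \in \LL$, exactly as in Claim~\ref{claim:1.1}, now with the added complication of the two new points $u, v$ and the top element $uvM$. By symmetry between $u$ and $v$, the substantive cases reduce to: (i) both $Z_i$ are flats of $M$; (ii) both have the form $uF_i$ with $F_i \in \M$; (iii) one equals $uF_1$ and the other $vF_2$ with $F_i \in \M$; (iv) a flat $F$ paired with some $uF'$; and (v) one of the $Z_i$ is $uvM$. Before diving in I would record the lattice operations in $\LL$: when $F_1 \cap F_2 \in \M$ we have $uF_1 \land uF_2 = u(F_1\cap F_2)$, but when $F_1 \cap F_2 \notin \M$ upward closure of $\M$ forces $uF_1 \land uF_2$ to be the flat $F_1\cap F_2$ itself (without $u$); the join $uF_1 \lor uF_2 = u\cl(F_1 \cup F_2)$ always lies in $\LL$ since $\cl(F_1\cup F_2)\supseteq F_1$ is in $\M$. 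Similarly $uF_1 \lor vF_2 = uvM$ and $uF_1 \land vF_2 = F_1\cap F_2$.

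Case (i) is immediate: $Z_1\land Z_2 = Z_1 \cap Z_2$, and both conditions follow from monotonicity and submodularity of $f$. Case (ii) parallels Claim~\ref{claim:1.1}: in the sub-case $F_1 \cap F_2 \in \M$ the $\eps$ shifts cancel and (\ref{eq:convolution.1}) reduces to submodularity of $f$; in the sub-case $F_1 \cap F_2 \notin \M$ the term $\mu(u)=\eps+\delta$ appears on the right, and the required inequality becomes $\delta(F_1, F_2) \ge \delta$, which is exactly the defining property of $\delta = \delta(\M)$. Case (iv) splits by comparability: for $F \subseteq F'$, condition (\ref{eq:convolution.2}) reduces to $f(F')-f(F)\le \mu(F'-F)$ together with the slack $\mu(u)\ge 0$; for incomparable $F,F'$, the join $u\cl(F\cup F')$ again lies in $\LL$ and (\ref{eq:convolution.1}) reduces to submodularity of $f$. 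Case (v) is routine because $\mu(u)+\mu(v)$ provides ample slack for the comparable pair $(Z, uvM)$.

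The step I expect to require the most care is Case (iii). There $uF_1 \land vF_2 = F_1\cap F_2$ and $uF_1 \lor vF_2 = uvM$, while $Z_1\cap Z_2 - Z_1\land Z_2 = \emptyset$, so (\ref{eq:convolution.1}) demands
\[
\eps \ \ge\ f(M) + f(F_1\cap F_2) - f(F_1) - f(F_2).
\]
This is precisely where the definition $\eps = f(M) - \min\{f(F):F\in\M\}$ is exploited: combining $\eps \ge f(M) - f(F_1)$ with monotonicity $f(F_1\cap F_2) \le f(F_2)$ closes the gap. In summary, the enumeration is a straightforward calculation, but the two parameters have been rigged precisely so that $\eps$ handles Case (iii) while $\mu(u) = \mu(v) = \eps+\delta$ handles the non-modular sub-case of Case (ii); verifying these two balance points is the real content of the claim.
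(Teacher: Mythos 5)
Your proposal is correct and follows essentially the same route as the paper: the paper also reduces everything to the single genuinely new case of the incomparable pair $uF_1, vF_2$ (citing Claim~\ref{claim:1.1} for the rest), identifies $uF_1\lor vF_2=uvM$ and $uF_1\land vF_2=F_1\cap F_2$ with an empty measure correction term, and verifies the resulting inequality via the choice of $\eps$. The only cosmetic difference is in closing that inequality: you use $\eps\ge f(M)-f(F_1)$ together with monotonicity $f(F_1\cap F_2)\le f(F_2)$, while the paper rewrites it as $\delta(F_1,F_2)+f(F_1\lor F_2)+\eps\ge f(M)$ and applies $F_1\lor F_2\in\M$; both exploit the definition of $\eps$ in the same way.
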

\begin{proof}
As (\ref{eq:convolution.2}) clearly holds we turn to
(\ref{eq:convolution.1}).
Using Claim \ref{claim:1.1}, it should only be checked for
incomparable pairs $uF_1, vF_2$ with $F_1,F_2\in\M$.
In this case $uF_1\lor vF_2=uvM$ and
$uF_1\land vF_2=F_1\cap F_2$, thus we need
$$
    r(uF_1)+r(vF_2) \ge r(uF_1\lor vF_2)+r(uF_1\land vF_2)+\mu(\emptyset),
$$
which rewrites to
$$
    \delta(F_1,F_2)+f(F_1\lor F_2)+\eps \ge f(M).
$$
As $F_1\lor F_2\in\M$, this holds by the choice of $\eps$.
\end{proof}
\begin{claim}\label{claim:2.2}
With $N=\{u,v\}\cup M$ conditions {\upshape(\ref{cond:1}) -- (\ref{cond:5})}
hold.
\end{claim}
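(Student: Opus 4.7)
The plan is to verify conditions (\ref{cond:1})--(\ref{cond:5}) by a straightforward case analysis on which of the three families a lattice element belongs to: a flat $F$ of $M$, a decorated flat $uF$ or $vF$ with $F\in\M$, or the top element $uvM$. Since $\LL$ has so few shapes of elements, the whole argument is bookkeeping. I would open the proof by fixing notation for the three families and then dispatch the conditions in the order (\ref{cond:5}), (\ref{cond:2}), (\ref{cond:1}), (\ref{cond:4}), (\ref{cond:3}), from easiest to most tedious.

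Conditions (\ref{cond:5}), (\ref{cond:2}), (\ref{cond:1}), (\ref{cond:4}) should take one line each. Condition (\ref{cond:5}) is the definition of $\mu$ restricted to $M$. Condition (\ref{cond:2}) holds because cyclic flats are flats of $M$ and all flats of $M$ are in $\LL$ by clause (a) of the lattice definition. For (\ref{cond:1}) I would just compute: $Z\cap M = Z$ if $Z$ is already a flat of $M$; $Z\cap M = F$ if $Z=uF$ or $Z=vF$ with $F\in\M$, and $F$ is a flat of $M$; and $Z\cap M=M$ if $Z=uvM$. In every case the intersection is a flat of $M$, hence in $\LL$. For (\ref{cond:4}), note that every element of $\LL$ not contained in $M$ contains $u$ or $v$, so the hypothesis $Z\subseteq M$ forces $Z$ to be a flat of $M$, and then the first line of the rank definition gives $r(Z)=f(Z)$.

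The substance, such as it is, sits in condition (\ref{cond:3}). Here I would enumerate the possible shapes of a strict containment $Z_1 < Z_2$ in $\LL$. Two flats of $M$: monotonicity of $f$. A flat $Z_1\subseteq M$ below a decorated flat $Z_2=uF$ (or $vF$): then $Z_1\subseteq F$, so $r(Z_1)=f(Z_1)\le f(F)\le f(F)+\eps = r(Z_2)$. Two same-decoration elements $uF_1<uF_2$ with $F_1\subsetneq F_2$ in $\M$ (symmetrically for $v$): the common $\eps$-shift cancels and monotonicity of $f$ on flats closes the case. Finally everything in $\LL$ lies below $uvM$, and $r(uvM)=f(M)+\eps$ dominates both $f(F)$ and $f(F)+\eps$ for any $F\subseteq M$. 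The only pair worth flagging as a non-issue is $uF$ versus $vG$: these are always incomparable, so they never trigger (\ref{cond:3}) at all.

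I do not anticipate a genuine obstacle; the claim is a verification. The one structural point I would highlight in passing is that the rank $r(uvM)=f(M)+\eps$ is chosen precisely so that the $\eps$ shift used on the decorated flats is inherited consistently by the top element, which is exactly what makes monotonicity go through at the top. This same choice of $\eps$, made large enough so that $\eps+\delta\ge f(M)-f(F\lor G)$ for every $F,G\in\M$, was what drove Claim \ref{claim:2.1}; here it just ensures (\ref{cond:3}) is satisfied without any further tuning.
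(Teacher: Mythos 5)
Your verification is correct and matches the paper's own proof, which consists literally of the sentence ``Similar to Claim~\ref{claim:1.2}, a case by case checking''; you have simply carried out the bookkeeping the paper leaves implicit. One small caveat on your closing remark: condition~(\ref{cond:3}) holds for \emph{any} $\eps\ge 0$ once the top rank is set to $f(M)+\eps$ (since $f(F)+\eps\le f(M)+\eps$ for every flat $F\subseteq M$), so the particular size of $\eps$ is irrelevant here; the specific choice $\eps=f(M)-\min\{f(F):F\in\M\}$ is needed only to make (\ref{eq:convolution.1}) survive the pair $uF$, $vG$ in Claim~\ref{claim:2.1}, via $\delta(F,G)+f(F\lor G)+\eps\ge f(M)$.
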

\begin{proof}
Similar to Claim \ref{claim:1.2}, a case by case checking.
\end{proof}

Let $(r',uvM)$ be the convolution of $(r,\LL)$ and $\mu$. 
\begin{claim}\label{claim:2.3}
{\upshape a)} $(r',uvM)$ is an extension of $(f,M)$.
{\upshape b)} $r'(uF)=r'(vF)=f(F)+\eps$ for all $F\in\M$.
{\upshape c)} $r'(uS)=r'(vS)=f(S)+\mu(u)$ for the intersection $S=F_1\cap
F_2$.
{\upshape d)} $r'(uvS)=\min\,\{\mu(uv)+f(S),r(uvM)\}$.
\end{claim}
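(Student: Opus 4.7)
Parts (a) and (b) will fall out of the lemmas already in place: Claim \ref{claim:2.1} is exactly the hypothesis of Theorem \ref{thm:convolution} and of Lemma \ref{lemma:keep-lattice-rank}, and Claim \ref{claim:2.2} is the hypothesis of Lemma \ref{lemma:conv-extend-poly}. Hence $(r',uvM)$ is a polymatroid with $r'(A)=f(A)$ for all $A\subseteq M$, giving (a), and $r'(Z)=r(Z)$ for every $Z\in\LL$; evaluating the latter identity at $Z=uF$ and $Z=vF$ with $F\in\M$ gives (b).

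For (c) and (d) the plan is to unfold
$$
r'(A)=\min\bigl\{\,r(Z)+\mu(A-Z)\,:\,Z\in\LL\,\bigr\}
$$
and enumerate the three types of $Z\in\LL$, using Lemma \ref{lemma:mod-cut-estimated} (which supplies $S\subseteq F$ and $f(F)-f(S)>\delta$ for every $F\in\M$) as the quantitative tool. For (c), with $A=uS$: a flat $Z\subseteq M$ gives $f(Z)+\mu(S-Z)+\mu(u)\ge f(S)+\mu(u)$ by submodularity of $f$, with equality at the flat $Z=S$; an element $uF$ gives $r(uF)=f(F)+\eps$, which strictly exceeds $f(S)+\delta+\eps=f(S)+\mu(u)$ since $uS-uF=\emptyset$; an element $vF$ gives the even larger $f(F)+\eps+\mu(u)$; and $uvM$ gives $f(M)+\eps\ge f(F_1)+\eps>f(S)+\mu(u)$. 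Thus $Z=S$ wins, proving (c) for $uS$, and the argument for $vS$ is symmetric.

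For (d), with $A=uvS$, the same case split shows that flats of $M$ contribute at least $f(S)+\mu(uv)$, attained at $Z=S$, and $uvM$ contributes exactly $r(uvM)$. The main obstacle will be verifying that the mixed candidates $Z=uF,vF$ are always dominated: each contributes $f(F)+\eps+\mu(u)=f(F)+2\eps+\delta$ (or the symmetric expression), which beats $f(S)+\mu(uv)=f(S)+2\eps+2\delta$ by Lemma \ref{lemma:mod-cut-estimated}(b), and beats $r(uvM)=f(M)+\eps$ because the deliberate choice $\eps=f(M)-\min_{F'\in\M}f(F')\ge f(M)-f(F)$ gives $f(F)+\eps\ge f(M)$, whence $f(F)+2\eps+\delta>f(M)+\eps$. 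Therefore only the two candidates $Z=S$ and $Z=uvM$ survive the minimum, and $r'(uvS)=\min\{\mu(uv)+f(S),\,r(uvM)\}$, as claimed.
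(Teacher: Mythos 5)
Your proposal is correct and follows essentially the same route as the paper: parts (a) and (b) are direct invocations of Lemma \ref{lemma:conv-extend-poly} and Lemma \ref{lemma:keep-lattice-rank} via Claims \ref{claim:2.1}--\ref{claim:2.2}, and parts (c) and (d) are handled by enumerating the three kinds of lattice elements and invoking Lemma \ref{lemma:mod-cut-estimated}(b), exactly as the paper does. The only (harmless) departures are cosmetic: in (c) you bound the $uvM$ contribution via $f(M)\ge f(F_1)$ rather than reducing to the already-treated $uM$ case, and in (d) you additionally check that the mixed candidates $uF,vF$ exceed $r(uvM)$, which is redundant once you have shown they exceed $\mu(uv)+f(S)$.
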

\begin{proof}
a) is a consequence of Lemma \ref{lemma:conv-extend-poly} and Claim
\ref{claim:2.2}, b) follows from Lemma \ref{lemma:keep-lattice-rank}
and Claim \ref{claim:2.1}. c) is the same computation as in Claim
\ref{claim:1.3}, the only lattice element not considered there is $uvM$.
This, however, cannot yield the minimum as
$$
    r(uvM)+\mu(uS{-}uvM) = r(uM)-\mu(uS{-}uM).
$$
Finally, $r'(uvS)$ is the minimum of $r(Z)+\mu(uvS{-}Z)$ as $Z$ runs over
the element of $\LL$. If $F$ is a flat of $M$, then
$$
   r(F)+\mu(uvS{-}F) = \mu(uv)+f(F)+\mu(S{-}F) \ge \mu(uv)+f(S)
$$
with equality when $F=S$. If $F\in\M$, then $S\subseteq F$, and
\begin{align*}
   r(uF)+\mu(uvS{-}uF) & = f(F)+\eps+\mu(v)  > {} \\
                       & > f(S)+\delta+\eps+\mu(v) = \mu(uv)+f(S)
\end{align*}
according to Lemma \ref{lemma:mod-cut-estimated}. As $\mu(uvS{-}uvM)=0$, it
proves claim d).
\end{proof}

\begin{lemma}\label{lemma:negative-ingleton}
There is a two-point Ingleton-violating extension of $(f,M)$ such that
$\ING(F_1,F_2;u,v\,|\,S)<0$.
\end{lemma}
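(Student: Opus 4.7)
The plan is to take the two-point extension $(r', uvM)$ just constructed and compute the four terms of the conditional Ingleton expression $\ING(F_1,F_2;u,v\,|\,S)$ directly. Since $S\subseteq F_1\cap F_2$, all the relevant unions collapse nicely: $F_iS=F_i$ and $F_1F_2S=F_1F_2$. By Claim \ref{claim:2.3}(a), $r'$ extends $f$, so $(F_1,F_2\,|\,S)=f(F_1)+f(F_2)-f(S)-f(F_1F_2)=\delta$.

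Next, I would handle the two conditional modular defects $(F_1,F_2\,|\,uS)$ and $(F_1,F_2\,|\,vS)$. The values $r'(uF_i)=f(F_i)+\eps$ and $r'(uS)=f(S)+\eps+\delta$ come from Claim \ref{claim:2.3}(b),(c). What is missing is the value of $r'(uF_1F_2)$. A short sub-claim, verified in exactly the same style as Claim \ref{claim:2.3}(c), shows that $r'(uF_1F_2)=f(F_1F_2)+\eps$: the upper bound is immediate from submodularity of $r'$ (or from $Z=u(F_1\lor F_2)\in\LL$ realizing this value in the convolution, using $f(\cl(A))=f(A)$); the lower bound is a case check over $Z\in\LL$, where flats of $M$ pick up an extra $\mu(u)=\eps+\delta$, lattice elements $uG$ with $G\in\M$ give $f(G)+\eps+\mu(F_1F_2{-}G)\ge f(F_1F_2)+\eps$, and the remaining cases are strictly larger. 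Substituting, $(F_1,F_2\,|\,uS)=\delta(F_1,F_2)-\delta=0$, and symmetrically $(F_1,F_2\,|\,vS)=0$.

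Consequently $\ING(F_1,F_2;u,v\,|\,S)=-\delta+(u,v\,|\,S)$. Using Claim \ref{claim:2.3}(c),(d),
$$
 (u,v\,|\,S)=2(f(S)+\eps+\delta)-f(S)-r'(uvS)=f(S)+2\eps+2\delta-r'(uvS),
$$
with $r'(uvS)=\min\{2(\eps+\delta)+f(S),\,f(M)+\eps\}$. I would finish by a case split on which branch attains the minimum. If $r'(uvS)=2(\eps+\delta)+f(S)$ then $(u,v\,|\,S)=0$ and $\ING=-\delta<0$. If $r'(uvS)=f(M)+\eps$ then $\ING=f(S)+\eps+\delta-f(M)$, and negativity follows from the definition $\eps=f(M)-\min\{f(F):F\in\M\}$ together with Lemma \ref{lemma:mod-cut-estimated}(b), which gives $\min\{f(F):F\in\M\}>f(S)+\delta$ and hence $f(M)>f(S)+\eps+\delta$.

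The main obstacle is the sub-claim $r'(uF_1F_2)=f(F_1F_2)+\eps$: monotonicity alone does not give the needed lower bound (it is off by $\eps$), so one must argue by submodularity applied to the pair $uF_1,uF_2$ — or, equivalently, inspect every $Z\in\LL$ in the convolution minimum. This is routine and parallels the calculations already carried out in Claims \ref{claim:2.3}(c),(d); everything else in the proof is algebraic bookkeeping, with the strict inequality $\min\{f(F):F\in\M\}>f(S)+\delta$ from Lemma \ref{lemma:mod-cut-estimated}(b) doing the essential work in the non-trivial case.
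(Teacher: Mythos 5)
Your proposal is correct and follows essentially the same route as the paper: you compute each term of the conditional Ingleton expression via Claim \ref{claim:2.3}, split on which branch of the minimum $r'(uvS)=\min\{\mu(uv)+f(S),r(uvM)\}$ is attained, and in the nontrivial branch invoke Lemma \ref{lemma:mod-cut-estimated}(b) to get strict negativity. The one place you are more explicit than the paper is the sub-claim $r'(uF_1F_2)=f(F_1F_2)+\eps$, which the paper leaves implicit (it follows from $r'$ extending $f$ and $u(F_1\lor F_2)\in\LL$); your justification of it is correct.
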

\begin{proof}
The above procedure gives the polymatroid $(r',uvM)$ which is an extension
of $(f,M)$ by Claim \ref{claim:2.3}. The terms in the conditional Ingleton 
expression can be computed as
\begin{align*}
   -r'(F_1,F_2\,|,S) &= -\delta(F_1,F_2) = -\delta, \\[2pt]
    r'(F_1,F_2\,|uS) &= r'(uF_1)+r'(uF_2)-r'(uF_1F_2)-r'(uS) = {} \\
                     & ~~~~~~~=\delta(F_1,F_2)+\eps-\mu(u) = 0, \\[2pt]
    r'(F_1,F_2\,vS) &= r'(vF_1)+r'(vF_2)-r'(vF_1F_2)-r'(vS) = 0, \\[2pt]
    r'(u,v\,|,S)    &= r'(uS)+r'(vS)-r'(uvS)-r'(S) = {} \\
                    &= f(S)+\mu(u)+f(S)+\mu(v)-r'(uvS)-f(S) = {} \\
                    &= \mu(uv)+f(S) - \min\,\{\mu(uv)+f(S),r(uvM)\}.
\end{align*}
If the minimum is taken by the first term $\mu(uv)+f(S)$, then
$r'(u,v|S)=0$, and then the Ingleton value is $-\delta<0$.
If the minimum is $r(uvM)=f(M)+\eps$, then the Ingleton value is
$$
   -\delta+\mu(uv)+f(S)-\big(f(M)+\eps\big) = \delta+\eps+f(S)-f(M).
$$
We claim that this is strictly negative. Indeed, suppose $F\in\M$ is so that
$f(M)-f(F)=\eps$, then
\begin{align*}
    f(M)-f(S) &=\big( f(M)-f(F)\big) + \big( f(F)-f(S)\big) ={} \\
              &= \eps + \big(f(F)-f(S)\big) > \eps +\delta
\end{align*}
by Lemma \ref{lemma:mod-cut-estimated}, which proves the lemma.
\end{proof}

\medskip
\begin{proof}[Proof of the main theorem]
Suppose $(f,M)$ has a non-principal modular cut. Choose flats $F_1$ and
$F_2$ such that the modular cut $\M$ generated by $F_1$ and $F_2$ has
modular defect $\delta = \delta(\M)=\delta(F_1,F_2)$, and $F_1\cap
F_2\notin\M$. By Lemma \ref{lemma:extract-common-info} $(f,M)$ has a
one-point extension $(f_1,aM)$ which extracts the conditional common
information of $F_1$ and $F_2$. By Lemma \ref{lemma:negative-ingleton} there
is two-point extension $(f_2,uvM)$ which adds an Ingleton-violating pair
$uv$. Finally, by Lemma \ref{lemma:cond-inequality}, $f_1$ and $f_2$ have no
amalgam.
\end{proof}

\noindent
\emph{Remark 1.} We proved Theorem
\ref{thm:main-result} for polymatroids, but with small modification the proof
works for matroids as well. If the given polymatroid is integer, then the
extensions constructed here are integer polymatroids as
well, thus are factors of matroids by Corollary \ref{corr:poly-to-matroid}.
If the starting polymatroid is a matroid, then these are matroid extensions,
and have no amalgam as the inequality in Lemma
\ref{lemma:cond-inequality} holds for all subsets, in particular for the
expanded high-rank atoms of the polymatroids.

\smallskip\noindent
\emph{Remark 2.} The proof above used the conditional version of inequality
(\ref{eq:inequality}). We could use the unconditional one by taking first the
contract of the polymatroid $(f,M)$ along $S$. By Corollary
\ref{corr:contracts-are-sticky} if the contract is not sticky neither is
$(f,M)$. It means that we could assume the flats $F_1$ and $F_2$ being
disjoint. We decided not to make this simplifying step to show that the
convolution method works seamlessly in the original setting.

\section{Conclusion}\label{sec:conclusion}

We have proposed a novel polymatroid construction using a ranked lattice
and a discrete measure over some finite set $M$. We proved some basic
properties of the convolution and illustrated its power by giving short 
and transparent proofs
for the interchangeability of certain matroid operations and extensions.
The following consequence of Theorem \ref{thm:factor-extension} might be of
independent interest: given
any subset $A\subset M$, one can replace the submatroid on $A$ by any other
matroid with the same rank keeping the matroid structure outside $A$
intact.

Given any (poly)matroid with a non-principal modular cut, the convolution 
technique was used to construct two extensions: one which
extracted the common information of the non-modular pair of flats, and the
second one which added an Ingleton-violating pair. These extension cannot
have an amalgam, as it would violate the (information-theoretic) inequality
proved in Section \ref{sec:inequality}.

The sticky matroid conjecture says that if a matroid has a non-modular pair
of flats then it is not sticky. We proved that if a matroid has a
non-principal modular cut then it is not sticky. The natural question
arises: is there any (poly)matroid which has a non-modular pair of flats, but
in which all modular cuts are principal? A ``no'' answer would settle the
sticky matroid conjecture.

%%%%%%%%%%%%%%%%%%%%%%%%%%%%%%%%%%%%%%%%%%%%%%%%%%%%%%%%%%%%%%%%%%%%%%%%%%%%%%%%%
\section*{Acknowledgments}

The author would like to thank the generous support of 
the Institute of Information Theory and Automation of the CAS, Prague.
The research reported in this paper was supported by GACR project
number 19-04579S, and partially by the Lend\"ulet program of the HAS.

%%%%%%%%%%%%%%%%%%%%%%%%%%%%%%%%%%%%%%%%%%%%%%%%%%%%%%%%%%%%%%%%%%%%%%%%%%%%%%%%%

\end{document}